
\documentclass{article}
%%%%%%%%%%%%%%%%%%%%%%%%%%%%%%%%%%%%%%%%%%%%%%%%%%%%%%%%%%%%%%%%%%%%%%%%%%%%%%%%%%%%%%%%%%%%%%%%%%%%%%%%%%%%%%%%%%%%%%%%%%%%%%%%%%%%%%%%%%%%%%%%%%%%%%%%%%%%%%%%%%%%%%%%%%%%%%%%%%%%%%%%%%%%%%%%%%%%%%%%%%%%%%%%%%%%%%%%%%%%%%%%%%%%%%%%%%%%%%%%%%%%%%%%%%%%
\usepackage{amssymb}
\usepackage{eurosym}
\usepackage{amsmath}
\usepackage{amsfonts}
\usepackage[ignoreall]{geometry}
\usepackage[title]{appendix}

\setcounter{MaxMatrixCols}{10}
%TCIDATA{OutputFilter=LATEX.DLL}
%TCIDATA{Version=5.50.0.2953}
%TCIDATA{<META NAME="SaveForMode" CONTENT="1">}
%TCIDATA{BibliographyScheme=Manual}
%TCIDATA{Created=Tuesday, May 05, 2020 18:23:11}
%TCIDATA{LastRevised=Friday, November 27, 2020 15:08:09}
%TCIDATA{<META NAME="GraphicsSave" CONTENT="32">}
%TCIDATA{<META NAME="DocumentShell" CONTENT="Standard LaTeX\Blank - Standard LaTeX Article">}
%TCIDATA{CSTFile=40 LaTeX article.cst}
%TCIDATA{ComputeDefs=
%$v=\left( v_{1},....,v_{n}\right) ,w=\left( w_{1},....,w_{n}\right) \inR^{n},
%$
%}

\newtheorem{theorem}{Theorem}

\newtheorem{definition}[theorem]{Definition}

\newtheorem{lemma}[theorem]{Lemma}

\newtheorem{proposition}[theorem]{Proposition}
\newtheorem{remark}[theorem]{Remark}

\newenvironment{proof}[1][Proof]{\noindent\textbf{#1.} }{\ \rule{0.5em}{0.5em}}
\input{tcilatex}
\sloppy
\flushbottom

\begin{document}

\title{Inequalities from Lorentz-Finsler norms}
\author{Nicu\c{s}or Minculete$^{1}$, Christian Pfeifer$^{2}$, Nicoleta Voicu$%
^{3}$ \\
%EndAName
$^{1,3}${\small Faculty of Mathematics and Computer Science,Transilvania
University of Brasov, }\\
{\small Iuliu Maniu 50, 500091 Brasov, Romania}\\
$^{2}${\small Laboratory of Theoretical Physics, Institute of Physics,
University of Tartu, }\\
{\small W. Ostwaldi 1, 50411 Tartu, Estonia}}
\date{}
\maketitle

\begin{abstract}
We show that Lorentz-Finsler geometry offers a powerful tool in obtaining
inequalities. With this aim, we first point out that a series of famous
inequalities such as: the (weighted) arithmetic-geometric mean inequality,
Acz\'{e}l's, Popoviciu's and Bellman's inequalities, are all particular
cases of a reverse Cauchy-Schwarz, respectively, of a reverse triangle
inequality holding in Lorentz-Finsler geometry. Then, we use the same method
to prove some completely new inequalities, including two refinements of Acz%
\'{e}l's inequality.
\end{abstract}

\section{Introduction}

The Cauchy-Schwarz inequality on the Euclidean space $\mathbb{R}^{n}:$ 
\begin{equation}
\left( \sum_{i=1}^{n}v_{i}^{2}\right) \cdot \left(
\sum_{i=1}^{n}w_{i}^{2}\right) \geq \left( \sum_{i=1}^{n}v_{i}w_{i}\right)
^{2},  \label{1_0}
\end{equation}%
$\forall v=\mathbf{(}v_{1},...,v_{n}\mathbf{),}$ $w=\mathbf{(}w_{1},...,w_{n}%
\mathbf{)\in }\mathbb{R}^{n}$, is a basic result, with applications in
almost all the branches of mathematics.

In 1956, Acz\'{e}l \cite{AC} introduced the following inequality 
\begin{equation}
\left( v_{0}^{2}-v_{1}^{2}-...-v_{n}^{2}\right) \left(
w_{0}^{2}-w_{1}^{2}-...-w_{n}^{2}\right) \leq \left(
v_{0}w_{0}-v_{1}w_{1}-...-v_{n}w_{n}\right) ^{2},  \label{1_1}
\end{equation}%
(holding for all $v\mathbf{=(}v_{0},v_{1},...,v_{n}\mathbf{),}$ $w\mathbf{=(}%
w_{0},w_{1},...,w_{n}\mathbf{)\in }\mathbb{R}^{n+1}$ such that $%
v_{0}^{2}-v_{1}^{2}-...-v_{n}^{2}>0,$ $w_{0}^{2}-w_{1}^{2}-...-w_{n}^{2}>0$%
), in relation to the theory of functional equations in one variable. The Acz%
\'{e}l inequality (\ref{1_1}), together with its generalization to
Lorentzian manifolds, known by the name of \textit{reverse Cauchy-Schwarz
inequality,} proved to be crucial to relativity theory and to theories of
physical fields.

Indeed, from a geometric standpoint, the two inequalities above are known to
be two sides of the same coin; while the usual Cauchy-Schwarz inequality (%
\ref{1_0}) is extended to positive definite inner product spaces - and
further on, to Riemannian manifolds - leading to the triangle inequality $%
\left\Vert v+w\right\Vert \leq \left\Vert v\right\Vert +\left\Vert
w\right\Vert ,$ (\ref{1_1}) is naturally extended to spaces with a
Lorentzian inner product (and more generally, to Lorentzian manifolds),
leading to a reverse triangle inequality (see, e.g., \cite{BeemEhrlich,ONeil}%
).

\bigskip

In this article we discuss a further generalization of the above picture
which has not been exploited so far. The Cauchy-Schwarz inequality and its
Lorentzian-reversed version can be extended to Finsler, \cite{Bao},
respectively, to Lorentz-Finsler spaces, \cite%
{Aazami:2014ata,Javaloyes2019,Minguzzi2014,Minguzzi:2013sxa}. Roughly
speaking, while a Riemannian manifold is a space equipped with a smoothly
varying family of inner products, a Finsler manifold is a space equipped
with a family of \textit{norms}\footnote{%
Actually, the notion of Finsler norm is slightly more general than the usual
one, as it is only required to be positively homogeneous instead of
absolutely homogeneous.} that do not necessarily arise from a scalar
product. Similarly, a Lorentz-Finsler manifold is equipped with a smoothly
varying family of so-called \textit{Lorentz-Finsler norms} of vectors, that
do not necessarily arise as the square root of any quadratic expression -
but are just positively 1-homogeneous in the considered vectors.

\bigskip

Usually, the Finslerian Cauchy-Schwarz inequality (also called the \textit{%
fundamental inequality, }\cite{Bao}) is proven under the assumption that the
Finsler norm $F$ of vectors has the property that the Hessian $Hess(F^{2})$
is positive definite; in the particular case of Riemannian spaces, this will
turn into the condition that the metric tensor $g$ is positive definite.
Similarly, its Lorentzian-reversed counterpart, \cite%
{Aazami:2014ata,Javaloyes2019,Minguzzi2014,Minguzzi:2013sxa} is proven under
the assumption that $Hess(F^{2})$ has Lorentzian signature for all vectors
in a strictly convex set. Under these assumptions, the obtained inequalities
are \textit{strict}, i.e., equality only holds when the vectors $v$ and $w$
are collinear.

\bigskip

As a preliminary step, we point out that the above conditions can be
relaxed. Namely, the respective inequalities still hold - just, non-strictly
- if we allow $Hess(F^{2})$ to be degenerate along some directions; also,
for practical matters, we replace the strict convexity assumption on the set
of interest with the more relaxed one that $F$ is defined on a convex conic
domain\textbf{\ }$\mathcal{T}$. An extension to the case when $F\ $is not
smooth is also presented. Moreover, we prove two refinements of the reverse
triangle inequality holding in general Lorentz-Finsler spaces in Section \ref%
{sec: refinements triangle}.

\bigskip

While the above generalizations are not spectacular for themselves, they
allow us much more freedom in choosing the range of examples and
applications. Indeed, we show in Section~\ref{sec:ex} that some of the most
famous inequalities on $\mathbb{R}^{n}$ are nothing but reverse
Cauchy-Schwarz inequalities for conveniently chosen (possibly, degenerate)
Lorentz-Finsler norms:

\begin{enumerate}
\item The usual arithmetic-geometric mean inequality: $\dfrac{1}{n}~\overset{%
n}{\underset{i=1}{\sum }}v_{i}\geq {\large (}\overset{n}{\underset{i=1}{%
\prod }}v_{i}{\large )}^{1/n},$ $\ \ \forall v_{i}\geq 0,$ $i=\overline{1,n}%
. $

\item The weighted arithmetic-geometric mean inequality: 
\begin{equation}
\dfrac{1}{a}\sum_{i=1}^{n}a_{i}v_{i}\geq \lbrack
(v_{1})^{a_{1}}...(v_{n})^{a_{n}}]^{1/a},  \label{weighted_a}
\end{equation}%
for all $a,a_{i},v_{i}\in \mathbb{R}_{+}^{\ast }$, such that $\overset{n}{%
\underset{i=1}{\sum }}a_{i}=a.$

\item Popoviciu's inequality, \cite{Pop}: 
\begin{equation}
\left( v_{0}^{p}-v_{1}^{p}-...-v_{n}^{p}\right) ^{1/p}\left(
w_{0}^{q}-w_{1}^{q}-...-w_{n}^{q}\right) ^{1/q}\leq
v_{0}w_{0}-v_{1}w_{1}-...-v_{n}w_{n},  \label{1_2}
\end{equation}%
holding for all $v_{i},w_{i}>0,$ such that $%
v_{0}^{p}-v_{1}^{p}-...-v_{n}^{p}>0$ and $%
w_{0}^{q}-w_{1}^{q}-...-w_{n}^{q}>0;$ the powers $p,q>1$ are such that $%
\dfrac{1}{p}+\dfrac{1}{q}=1$. In particular, for $p=q=\dfrac{1}{2},$
Popoviciu's inequality yields Acz\'{e}l's inequality.

\item Another result, due to Bellman \cite{Bel, Mitrinovic1970,
Mitrinovic1993}: 
\begin{equation}
\left( v_{0}^{p}-v_{1}^{p}-...-v_{n}^{p}\right) ^{1/p}+\left(
w_{0}^{p}-w_{1}^{p}-...-w_{n}^{p}\right) ^{1/p}\leq \lbrack
(v_{0}+w_{0})^{p}-(v_{1}+w_{1})^{p}-...-(v_{n}+w_{n})^{p}]^{1/p},
\label{Bellman}
\end{equation}%
(with $v_{i},w_{i}$ as above and$\ p>1$) is just the reverse triangle
inequality corresponding to (\ref{1_2}).
\end{enumerate}

Further, in Sections~\ref{Section_bimetric} and \ref{Section_Kropina}, we
use two particular classes of Lorentz-Finsler norms (the so-called \textit{%
bimetric} \cite{PerlickBook,Pfeifer:2011tk,Punzi:2007di} and $\left( \alpha
,\beta \right) $-metrics (with focus on a particular case, \textit{Kropina}
norms \cite{Kropina}), in order to prove some new inequalities.

\bigskip

In Section \ref{Aczel section}, we prove the following class of inequalities
on $\mathbb{R}^{n+1}$:%
\begin{equation}
\lbrack v_{0}w_{0}-\bar{g}_{\vec{v}}(\vec{v},\vec{w})]^{2}-[v_{0}^{2}-\left%
\Vert \vec{v}\right\Vert ^{2}][w_{0}^{2}-\left\Vert \vec{w}\right\Vert
^{2}]\geq 0  \label{Aczel_Finsler}
\end{equation}%
(for all $\vec{v}=\left( v_{1},....,v_{n}\right) ,\vec{w}=\left(
w_{1},....,w_{n}\right) \in \mathbb{R}^{n},$ $v_{0},w_{0}>0$ such that $%
v_{0}^{2}-\left\Vert \vec{v}\right\Vert ^{2}\geq 0,$ $w_{0}^{2}-\left\Vert 
\vec{w}\right\Vert ^{2}\geq 0$), where $\left\Vert \vec{v}\right\Vert =\bar{F%
}(\vec{v})$ is an arbitrary Finsler norm on $\mathbb{R}^{n}$ and $\bar{g}_{%
\vec{v}}=\dfrac{1}{2}Hess_{\vec{v}}(\bar{F}^{2})$ is the corresponding
Finsler metric tensor -- thus generalizing the usual Acz\'{e}l inequality.
Using the positive definite version of the Finslerian Cauchy-Schwarz
inequality, we then find two refinements thereof:%
\begin{equation}
\lbrack v_{0}w_{0}-\bar{g}_{\vec{v}}(\vec{v},\vec{w})]^{2}-[v_{0}^{2}-\left%
\Vert \vec{v}\right\Vert ^{2}][w_{0}^{2}-\left\Vert \vec{w}\right\Vert
^{2}]\geq \dfrac{\left( w^{0}\right) ^{2}-\left\Vert \vec{w}\right\Vert ^{2}%
}{\left\Vert \vec{w}\right\Vert ^{2}}\left( \left\Vert \vec{v}\right\Vert
^{2}\left\Vert \vec{w}\right\Vert ^{2}-\bar{g}_{\vec{v}}(\vec{v},\vec{w}%
)\right) ;  \label{refinement1}
\end{equation}

\begin{equation}
\lbrack v_{0}w_{0}-\bar{g}_{\vec{v}}(\vec{v},\vec{w})]^{2}-[v_{0}^{2}-\left%
\Vert \vec{v}\right\Vert ^{2}][w_{0}^{2}-\left\Vert \vec{w}\right\Vert
^{2}]\geq \left[ w^{0}\dfrac{\bar{g}_{\vec{v}}(\vec{v},\vec{w})}{\left\Vert 
\vec{w}\right\Vert }-v^{0}\left\Vert \vec{w}\right\Vert \right] ^{2}.
\label{refinement2}
\end{equation}

\section{Reverse inequalities for Lorentzian bilinear forms\label%
{sec:classineq}}

Before we study the extended Finslerian case, we briefly recall the
classical inequalities for bilinear forms.

Throughout the paper, we denote by $V$ a real $\left( n+1\right) $%
-dimensional space. We will use Einstein's summation convention, if not
otherwise explicitly stated: whenever in an expression an index $i$ appears
both as a superscript and as a subscript, we will automatically understand
summation over all possible values of $i$, i.e., instead of $\underset{i=0}{%
\overset{n}{\sum }}a_{i}b^{i},$ we will write simply, $a_{i}b^{i}$. This is
why, we will typically number components of vectors with superscripts from $%
0 $ to $n$, rather than with subscripts; this way, the expression of a
vector $v\in V$ in the basis $\left\{ e_{i}\right\} _{i=\overline{0,n}}$
will be written as%
\begin{equation*}
v=v^{i}e_{i}.
\end{equation*}%
Unless elsewhere specified, by "smooth", we will mean $\mathcal{C}^{\infty }$
(though usually, differentiability of some finite order is sufficient). We
will denote by $i,j,k...$\ indices running from $0$\ to $n$\ and by Greek
letters $\alpha ,\beta ,\gamma ,..,$\ indices running from $1$\ to $n.$

\bigskip

A \textit{Lorentzian scalar product, \cite{BeemEhrlich, Minguzzi2019, ONeil},%
} on the $\left( n+1\right) $-dimensional vector space $V$ is a symmetric
bilinear form $g:V\times V\rightarrow \mathbb{R}$ of index $n$. If $V$
admits a Lorentzian scalar product, it is called an $(n+1)$-dimensional 
\textit{Minkowski spacetime}. Choosing an arbitrary basis, we have: 
\begin{equation}
g(v,v)=g_{ij}v^{i}v^{j},  \label{Lorentzian bilinear form general}
\end{equation}%
where $(g_{ij})$ is a matrix with constant entries. In particular, in a $g$%
-orthonormal basis of $V,$ the bilinear form $g$ has the expression%
\begin{equation}
g(v,v)=\eta _{ij}v^{i}v^{j}=\left( v^{0}\right) ^{2}-\left( v^{1}\right)
^{2}-...-\left( v^{n}\right) ^{2},  \label{Minkowski metric}
\end{equation}%
where $\left( \eta _{ij}\right) =diag(1,-1,-1,...,-1).$

A nonzero vector $v\in V$ is called \textit{timelike} if $g(v,v)>0$ and 
\textit{causal}, if $g(v,v)\geq 0$. The set of causal vectors consists of
two connected components, corresponding to the choices $v^{0}>0$ and $%
v^{0}<0 $ respectively in a given (arbitrary) $g$-orthonormal basis.

In the following, we will denote by $C$ one of these two connected
components. By conveniently choosing the basis, we can assume that, for all $%
v\in C,$ we have $v^{0}>0.$ The elements of $C$ are called \textit{%
future-directed causal vectors.}

Denote:%
\begin{equation}
F(v):=\sqrt{g(v,v)},~\ \ \forall v\in C.  \label{pseudo-norm}
\end{equation}%
The function $F:C\rightarrow \mathbb{R}^{+}$ defined by the above relation
is sometimes called, by analogy with the Euclidean case, the Lorentzian
(pseudo-)\textit{norm }associated to the Lorentzian scalar-product $g$.

We will denote by:%
\begin{equation}
\mathcal{T}~\ \mathcal{=}\left\{ v\in C~\ |~F(v)>0\right\} ,
\label{definition_T}
\end{equation}%
the subset of $C$ consisting of timelike vectors. Elements of $\mathcal{T}$
are called \textit{future-directed timelike vectors}. The set $\mathcal{T}$
is always convex.

On a Minkowski spacetime $(V,g),$ the following inequalities hold (see,
e.g., \textit{\cite[Proposition 30]{ONeil}})

\begin{itemize}
\item \textbf{Reverse Cauchy-Schwarz inequality}:%
\begin{equation}
g(v,w)\geq F(v)F(w),~\ \ \ \forall v,w\in C;
\label{classical reverse CS ineq}
\end{equation}

\item \textbf{Reverse triangle inequality:}%
\begin{equation*}
F(v+w)\geq F(v)+F(w),~\ \forall v,w\in C.
\end{equation*}%
These inequalities are \textit{strict}, in the sense that equality holds if
and only if $v$ and $w$ are collinear.
\end{itemize}

\bigskip

\textbf{Particular case (Acz\'{e}l's inequality). }For $V=\mathbb{R}^{n+1}$
equipped with a $g$-orthonormal basis, the reverse Cauchy-Schwarz inequality 
\begin{equation}
(v^{0}w^{0}-v^{1}w^{1}-....-v^{n}w^{n})^{2}\geq \lbrack \left( v^{0}\right)
^{2}-\left( v^{1}\right) ^{2}...-\left( v^{n}\right) ^{2}][\left(
w^{0}\right) ^{2}-\left( w^{1}\right) ^{2}...-\left( w^{n}\right) ^{2}],
\label{Aczel}
\end{equation}%
$\forall v,w\in C$, becomes Acz\'{e}l's inequality (\ref{1_1}).

\bigskip

\begin{remark}
\textbf{(Positive definite bilinear form\textbf{s}):} In the case when the
metric $g$ is positive definite, we have: $C=V,$ $\mathcal{T}=V\backslash
\{0\}.$ The usual, non-reversed Cauchy-Schwarz inequality 
\begin{equation}
g(v,w)\leq F(v)F(w)  \label{non-reversed CS}
\end{equation}%
and the usual triangle inequality:%
\begin{equation}
F(v+w)\leq F(v)+F(w)  \label{non-reversed triangle ineq}
\end{equation}%
hold strictly on the entire space $V,$ see for example \cite[Proposition 18]%
{ONeil}.
\end{remark}

\section{\label{ineq_Finsler norms}Finsler and Lorentz-Finsler functions on
a vector space}

We saw in the previous section that the famous reverse and non-reverse
Cauchy-Schwarz inequalities, thus in particular Acz\'{e}l's inequality, are
closely connected to the geometric concepts of (pseudo)-Riemannian geometry.
Here we show that further famous inequalities are also related to a
geometric concept, namely to the concept of \textit{(pseudo)-Finsler geometry%
}.

\subsection{Finsler structures on a vector space}

Let $V$ be a real $\left( n+1\right) $-dimensional space as above.

A \textit{Finsler} norm on $V$ is "almost" a norm in the usual sense; the
difference consists in the fact that it is only \textit{positively }%
homogeneous, instead of absolutely homogeneous. The precise definition is
given below.

\begin{definition}
(\cite{Bao}): A Finsler norm on the vector space $V$ is a function $%
F:V\rightarrow \lbrack 0,\infty )$ with the following properties:

\begin{enumerate}
\item $F$ is smooth on $\mathcal{T}${\textbf{$=$}}$V\backslash \{0\}$ and
continuous at $v=0;$

\item $F$ is positively homogeneous of degree 1, i.e., $F(\lambda v)=\lambda
F(v),$ $\forall \lambda >0;$

\item For every $v\in \mathcal{T},$ the fundamental tensor $g_{v}:V\times
V\rightarrow \mathbb{R},$ 
\begin{equation}
\ g_{v}(u,w):=\dfrac{1}{2}\dfrac{\partial ^{2}F^{2}}{\partial t\partial s}%
(v+tu+ws)\mid _{t=s=0}  \label{g_ definition}
\end{equation}%
is positive definite.
\end{enumerate}
\end{definition}

\bigskip

\textbf{Note: }In the Finsler geometry literature, a function $F$ with the
above properties is called a \textit{Minkwoski norm}. Yet, in order to avoid
confusions with the \textit{Minkowski metric} $\eta $ as defined above, we
will avoid this terminology here and call $F$ instead, a \textsl{Finsler}%
\textit{\ }norm.

\bigskip

\textbf{Particular case. }Euclidean spaces are recovered for $F(v)=\sqrt{%
a_{ij}v^{i}v^{j}},$ where $\left( a_{ij}\right) $ is a constant matrix
(i.e., $g_{v}=a$ does not depend on $v$). In this case, the Finsler norm $F$
is a Euclidean one, since it arises from a scalar product.

\bigskip

Yet, generally, a Finsler norm does not generally arise from a scalar
product. Nevertheless, there exists a similar notion to a scalar product -
namely, the \textit{fundamental }(or \textit{metric})\textit{\ tensor }$%
g_{v} $ - but, in general $g_{v}$ has a nontrivial dependence on the vector $%
v.$ More precisely, the fundamental tensor of the Finsler space $(V,F)$ is
the mapping $g:V\backslash \{0\}\rightarrow T_{2}^{0}(V),$ $v\mapsto g_{v},$
which associates to each vector $v$ the symmetric and positive definite
bilinear form $g_{v}$ defined above. With respect to an arbitrary basis $%
\left\{ e_{i}\right\} _{i=\overline{0,n}}$ of $V,$ the fundamental tensor $%
g_{v}$ has the matrix:%
\begin{equation}
g_{ij}(v)=\dfrac{1}{2}\dfrac{\partial ^{2}F^{2}}{\partial v^{i}\partial v^{j}%
}(v).  \label{metric tensor}
\end{equation}%
that is:%
\begin{equation}
g_{v}(u,w)=g_{ij}(v)u^{i}w^{j}.  \label{g_v bilinear}
\end{equation}%
Hence, for each $v\in V,~g_{v}$ is a scalar product (with "reference vector" 
$v$) on $V.$ Moreover, due to the homogeneity of $F,$ there holds a similar
formula to the one in Euclidean geometry: 
\begin{equation}
F(v)=\sqrt{g_{v}(v,v)}.  \label{F in terms of g_v}
\end{equation}

In the following, we denote the derivatives of $F$ with subscripts:\ $F_{i}:=%
\dfrac{\partial F}{\partial v^{i}},$ $F_{ij}=\dfrac{\partial ^{2}F}{\partial
v^{i}\partial v^{j}}$ etc.

At any $v\in V\backslash \{0\},$ the Hessian of $F:$ 
\begin{equation}
F_{ij}(v)=\dfrac{1}{F}[g_{ij}(v)-F_{i}(v)F_{j}(v)],  \label{angular metric}
\end{equation}%
is positive semidefinite, with radical spanned by $v.$ This fact serves to
prove, (see \cite{Bao}, p. 8-9):

\begin{enumerate}
\item \textit{the fundamental (or Cauchy-Schwarz) inequality:} 
\begin{equation}
dF_{v}(w)\leq F(w),~\ \forall v,w\in V\backslash \{0\};
\label{pos def CS Finsler}
\end{equation}

\item \textit{the triangle inequality: } 
\begin{equation}
F(v+w)\leq F(v)+F(w),~~\forall v,w\in V.
\label{pos def triangle ineq Finsler}
\end{equation}
\end{enumerate}

The above inequalities are strict, i.e., equality only holds when $v$ and $w$
are collinear.

\bigskip

With respect to a given basis, the fundamental inequality takes the form:%
\begin{equation}
F_{i}(v)w^{i}\leq F(w).  \label{pos_def_CS_FInsler_coords}
\end{equation}%
The name of Cauchy-Schwarz inequality for (\ref{pos_def_CS_FInsler_coords})\
is justified by the following. Noticing that%
\begin{equation}
dF_{v}(w)=F_{i}(v)w^{i}=\dfrac{g_{ij}(v)v^{j}w^{i}}{F(v)}=\dfrac{g_{v}(v,w)}{%
F(v)},  \label{F_w}
\end{equation}%
this inequality can be equivalently written as:%
\begin{equation}
g_{v}(v,w)\leq F(v)F(w),  \label{pos def CS Finsler detailed}
\end{equation}%
i.e., the fundamental inequality (\ref{pos def CS Finsler}) is just a
generalization of the usual Cauchy-Schwarz inequality \eqref{non-reversed CS}%
.

\subsection{Lorentz-Finsler structures}

An important feature of Lorentz-Finsler functions is that, typically, they
can only be defined on a conic subset of $V.$

In the following, by\textbf{\ }a \textit{conic domain} of $V,$ we will mean
an open connected subset $\mathcal{Q}$ of $V\backslash \{0\}$ with the 
\textit{conic property}: 
\begin{equation*}
\forall v\in \mathcal{Q},\forall \lambda >0:~\lambda v\in \mathcal{Q}.
\end{equation*}

The definition below is slightly more general than the one by Javaloyes and
Sanchez, \cite[ p. 21]{Javaloyes2019}:

\begin{definition}
Let $\mathcal{T}\subset V\backslash \{0\}$ be a conic domain. We call a
Lorentz-Finsler norm on $\mathcal{T}$ a smooth function $F:\mathcal{T}%
\rightarrow (0,\infty )$ such that:

\begin{enumerate}
\item $F$ is positively homogeneous of degree 1: $F(\lambda v)=\lambda F(v),$
$\forall \lambda >0,$ $\forall v\in \mathcal{T}.$

\item For every $v\in \mathcal{T},$ the fundamental tensor $g_{v}:V\times
V\rightarrow \mathbb{R},$ 
\begin{equation*}
\ g_{v}(u,w):=\dfrac{1}{2}\dfrac{\partial ^{2}F^{2}}{\partial t\partial s}%
(v+tu+ws)\mid _{t=s=0}
\end{equation*}%
has Lorentzian signature $(+,-,-,...,-)$.
\end{enumerate}
\end{definition}

A Lorentz-Finsler norm can always be continuously extended as 0 at $v=0.$

\bigskip

\textbf{Notes:}

\begin{enumerate}
\item The difference between the above introduced notion and the one of 
\textit{Lorentz-Minkowski norm} presented in \cite{Javaloyes2019} is that we
will \textit{not} require $F$ to be extended as 0 on $\partial \mathcal{T};$
while this requirement is important to applications in physical theories, in
our case, it would just uselessly limit the range of allowed examples (see,
e.g., Section \ref{Section_Popoviciu}). Actually, as we will see in the next
section, we will even allow $g_{v}$ to be degenerate at some vectors $v\in 
\mathcal{T}$.

\item Equipping a differentiable manifold $M$ with a smooth family of
Lorentz Finsler norms $p\mapsto F(p)$ which define a Lorentz Finsler
structure $F(p)$ on each tangent space $T_{p}M,$ $p\in M$, and demanding
that $F|_{\partial \mathcal{T}}=0,$ makes the pair $(M,L=F^{2})$ a Finsler
spacetime, \cite{cosmo-Berwald,Javaloyes2019}. Finsler spacetimes gain
attention in the application to gravitational physics \cite%
{Hohmann:2018rpp,Pfeifer:2019wus}, as well as in the mathematical community
as generalizations of Lorentzian manifolds \cite{Bernal:2020bul}.

\item If, in the above definition, one replaces the condition of Lorentzian
signature with positive definiteness, one obtains the notion of (positive
definite) \textit{conic Finsler metric, \cite{Javaloyes2019}}. Thus, a usual
Finsler metric is a conic Finsler metric with $\mathcal{T}=V\backslash
\{0\}. $
\end{enumerate}

For Lorentz-Finsler norms $F$, the matrix $g_{ij}(v)$ is defined by the same
formula (\ref{metric tensor}) (but this time, it has Lorentzian signature)
and the relation $F(v)=\sqrt{g_{v}(v,v)}$ still holds. The Hessian $F_{ij}$
is negative semidefinite with radical spanned by $v,$ i.e.,%
\begin{equation}
F_{ij}(v)w^{i}w^{j}\leq 0,  \label{Hess(F)_ineq}
\end{equation}%
for all $v\in \mathcal{T},$ and $w\in V$, where equality implies that $w$ is
collinear to $v$. Conversely, if $(F_{ij}(v))$\ is negative semidefinite
with 1-dimensional radical, then $g_{v}\ $has $(+,-,-,...,-)$ signature, (see%
\footnote{%
The proof of (\ref{Hess(F)_ineq}) inside the open conic set $\mathcal{T}$ in
the cited paper does not require $F$ to be extendable as $0$ on $\partial T,$
hence the result holds with no modification in our case\textbf{.}} \cite%
{Javaloyes2019}, Proposition 4.8 and, respectively, Lemma 4.7)\textbf{.}

\bigskip

\textbf{Examples of Lorentz-Finsler norms. }Here we just briefly list some
examples $F:\mathcal{T}\rightarrow \mathbb{R}$ (defined on conic subsets $%
\mathcal{T\subset }\mathbb{R}^{n+1}$), to be examined in the following
sections.

\begin{enumerate}
\item The $(n+1)$-dimensional \textit{Minkowski metric}: $F(v)=\sqrt{\eta
_{ij}v^{i}v^{j}}.$

\item $\left( \alpha ,\beta \right) $-\textit{spacetime metrics: }$%
F(v)=\varphi (s)\sqrt{\eta _{ij}v^{i}v^{j}}$, where $s=\dfrac{b_{i}v^{i}}{%
\sqrt{\eta _{ij}v^{i}v^{j}}}$ and $\varphi =\varphi (s)$ is a smooth
function on its domain of definition.

\item \textit{The }$p$\textit{-pseudo-norm}: $F(v)=$ $\left[ \left(
v^{0}\right) ^{p}-\left( v^{1}\right) ^{p}-...-\left( v^{n}\right) ^{p}%
\right] ^{\tfrac{1}{p}}.$

\item The $(n+1)$-dimensional Berwald-Mo\'{o}r metric $%
F(v)=(v^{0}v^{1}...v^{n})^{\tfrac{1}{n+1}}.$

\item Bimetric spaces: $~F(v)=[(\eta _{ij}v^{i}v^{j})(h_{kl}v^{k}v^{l})]^{%
\tfrac{1}{4}}$, where $h_{kl}v^{k}v^{l}$ has Lorentzian signature.
\end{enumerate}

The latter three examples belong to a wider class of Lorentz-Finsler
functions $F$, called $m$\textit{-th root metrics}, expressed as the $m$-th
root of some polynomial of degree $m>2$ in $v^{i}$.

\subsection{The degenerate/non-smooth case\label{Section degenerate case}}

In previous works on the topic, such as \cite%
{Aazami:2014ata,Javaloyes2019,Minguzzi2014,Minguzzi:2013sxa}, the Finslerian
generalizations of the reverse Cauchy-Schwarz inequality and of the reverse
triangle inequality were proven under the hypothesis that the set%
\begin{equation*}
B(1)=F^{-1}([1,\infty ))
\end{equation*}%
is strictly convex (a sufficient condition thereof is that $F$ vanishes on $%
\partial \mathcal{T}$ - which, as we mentioned above, is not assumed here).
These inequalities are strict, i.e., equality happens if and only if $v$ and 
$w$ are collinear. Also, in \cite{Javaloyes2019}, it is proven that, if $%
B(1) $\ is just (non-strictly)\ convex, then the inequalities hold
non-strictly.

\bigskip

In this section, we will present a reformulation - and a slight extension -
of the above results (by relaxing either the nondegeneracy condition on $%
g_{v}$ or the smoothness, even the continuity, assumption on $F$). Also, we
will only require as a hypothesis the convexity of the domain $\mathcal{T}$.

\subsubsection{The non-smooth case}

Let us drop, for the moment, any smoothness (or even continuity) assumption
on $F.$ We obtain the following result.

\begin{proposition}
\label{non-smooth case}For a positively 1-homogeneous function $F:\mathcal{%
T\rightarrow }(0,\infty ),$ $v\mapsto F(v)$ defined on a convex conic domain%
\textbf{\ }$\mathcal{T}\subset V\backslash \{0\},$ the following statements
are equivalent:

(i) $F$ obeys the reverse triangle inequality $F(u+v)\geq F(u)+F(v),$ $%
\forall u,v\in \mathcal{T};$

(ii) $F$ is a concave function;

(iii) the set $B(1)=F^{-1}([1,\infty ))$ is convex;

Moreover, the reverse triangle inequality of $F$ is strict if and only if
the convexity of $B(1)$ is strict.
\end{proposition}

\begin{proof}
\textit{(i) }$\rightarrow $ \textit{(ii): }Assume that\textit{\ }$F$ obeys
the reverse triangle inequality and pick two arbitrary vectors $u,v\in 
\mathcal{T}.$ Then, for any $\alpha \in \lbrack 0,1],$ the convex
combination $(1-\alpha )u+\alpha v$ lies in $\mathcal{T}$ (as $\mathcal{T}$
is assumed to be convex), hence, it makes sense to speak about $F((1-\alpha
)u+\alpha v).$ Using \textit{(i)} and the homogeneity of $F,$ we find: 
\begin{equation}
F((1-\alpha )u+\alpha v)\geq F((1-\alpha )u)+F(\alpha v)=\left( 1-\alpha
\right) F(u)+\alpha F(v),  \label{concave_F}
\end{equation}%
i.e., $F$ is concave.

\textit{(ii) }$\rightarrow $\textit{(i):} If $F$ is concave, then, for any $%
u,v\in \mathcal{T}:$ $F(\dfrac{u+v}{2})\geq \dfrac{1}{2}F(u)+\dfrac{1}{2}%
F(v).$ Using the homogeneity of $F,$ this yields the reverse triangle
inequality (\ref{reverse triangle}).

\textit{(i)}$\rightarrow $\textit{(iii): }Assuming that the reverse triangle
inequality holds, pick two arbitrary vectors $v,w\in B(1)$ (i.e., $%
F(v),F(w)\geq 1$) and an arbitrary $\alpha \in \lbrack 0,1].$ Then,%
\begin{equation*}
F((1-\alpha )v+\alpha w)\geq F((1-\alpha )v)+F(\alpha w)=(1-\alpha
)F(v)+\alpha F(w)\geq 1,
\end{equation*}%
which means that $(1-\alpha )v+\alpha w\in B(1).$ Consequently, $B(1)$ is
convex. Also, if the triangle inequality is strict, then, for all
non-collinear $v,w$, the first inequality above is strict, which eventually
leads to $F((1-\alpha )v+\alpha w)>1,$ i.e., the convexity of $B(1)$ is
strict.

\textit{(iii)}$\rightarrow $\textit{(i): }The idea of the proof is similar
to the one in the positive semi-definite case (see e.g., \cite{Javaloyes2014}%
). Assume $B(1)$ is convex and pick two arbitrary vectors $v,w\in \mathcal{T}%
.$ By the 1-homogeneity of $F,$ it follows that the vectors $v^{\prime }:=%
\dfrac{v}{F(v)},$ $w^{\prime }=\dfrac{w}{F(w)}$ obey $F(v^{\prime
})=F(w^{\prime })=1$, i.e., $v^{\prime },w^{\prime }\in \partial B(1)$. Set $%
\alpha :=\dfrac{F(w)}{F(v)+F(w)}\in (0,1)$ and build the convex combination:%
\begin{equation*}
u:=(1-\alpha )v^{\prime }+\alpha w^{\prime }=\dfrac{v+w}{F(v)+F(w)}\in 
\mathcal{T}.
\end{equation*}

By the homogeneity assumption on $F,$ we have $u\in B(1),$ i.e., $F(u)\geq
1. $ But, using again the homogeneity of $F,$ this is:\ $F(v+w)\geq
F(v)+F(w),$ q.e.d. If $B(1)$ is strictly convex, then, whenever $u,v$ are
non-collinear, $u$ belongs to the interior of $B(1),$ i.e., $F(u)>1.$ This
leads to the strictness of the reverse triangle inequality.
\end{proof}

The above result extends a result in \cite{Aazami:2014ata}, by removing the
restrictions on the continuity of $F$ or on the boundary $\partial \mathcal{T%
}.$

\bigskip

Similarly,\textbf{\ }for a 1-homogeneous function $F:\mathcal{T}\rightarrow 
\mathbb{R},$ there holds (see also \cite{Javaloyes2014}) the equivalence:

\begin{center}
\textit{(non-reversed) triangle inequality} $\Leftrightarrow F$ \textit{-
convex} $\Leftrightarrow F^{-1}([0,1])$ \textit{- convex.}
\end{center}

\bigskip

\textbf{Remark. }For a general positively 1-homogeneous function $F:\mathcal{%
T\rightarrow }\mathbb{R}$ (where $\mathcal{T\in }V\backslash \{0\}$ is
conic), convexity of $B(1)=F^{-1}\left( [1,\infty )\right) $ is a stronger
requirement than convexity of its domain $\mathcal{T}$. Indeed, if $B(1)$ is
convex, then, the homogeneity of $F$ implies that all the sets $%
aB(1)=F^{-1}\left( [a,\infty )\right) ,$ $a>0$ are convex; this immediately
implies the convexity of $\mathcal{T=}F^{-1}\left( (0,\infty )\right) $).
Yet, the converse statement is generally not true; as we have seen above, it
actually depends on the concavity of the function $F$.

\subsubsection{Degenerate Lorentz-Finsler norms}

Now, let us assume again that the function $F$ is smooth. We will first
prove a lemma, which extends (\ref{Hess(F)_ineq}) to degenerate Finsler
structures.

\begin{lemma}
\label{signature_Hess(F)} Consider a smooth, 1-homogeneous function $F:%
\mathcal{T\rightarrow }\mathbb{R}$ defined on a conic domain $\mathcal{T}%
\subset V\backslash \{0\},$ an arbitrary $v\in \mathcal{T}$ and denote, with
respect to an arbitrary basis: 
\begin{equation*}
g_{ij}(v)=\dfrac{1}{2}\dfrac{\partial ^{2}F^{2}}{\partial v^{i}\partial v^{j}%
}(v)
\end{equation*}%
Then:

\begin{enumerate}
\item[\textit{(i)}] The matrix $\left( g_{ij}(v)\right) $ has only one
positive eigenvalue if and only if the Hessian $(F_{ij}(v))$ is negative
semidefinite.

\item[\textit{(ii)}] The matrix $\left( g_{ij}(v)\right) $ is positive
semidefinite if and only if the Hessian $(F_{ij}(v))$ is positive
semidefinite.
\end{enumerate}
\end{lemma}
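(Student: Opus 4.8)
The plan is to reduce both statements to a single sign computation built on the algebraic relation between $g_{ij}$, the gradient $F_i$ and the Hessian $F_{ij}$. First I would differentiate $F^2$ twice and use the product rule to record the pointwise identity
\begin{equation*}
g_{ij}(v) = F_i(v)F_j(v) + F(v)F_{ij}(v),
\end{equation*}
which is just (\ref{angular metric}) read backwards and which holds on all of $\mathcal{T}$ without any dividing. Contracting with $v^j$ and invoking Euler's relations $F_i(v)v^i = F(v)$ and $F_{ij}(v)v^j = 0$ (the latter because $F_i$ is $0$-homogeneous) yields $g_{ij}(v)v^j = F(v)F_i(v)$, and hence $g_v(v,v) = F(v)^2$ together with $g_v(v,w) = F(v)\,dF_v(w)$. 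Since the functions under consideration are norms, we use $F(v)>0$, so $g_v(v,v)=F(v)^2>0$; in particular $v$ is not $g_v$-null.

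The key step is then to evaluate the Hessian quadratic form. Contracting the identity above with $w^iw^j$ gives
\begin{equation*}
F(v)\,F_{ij}(v)w^iw^j = g_v(w,w) - \left(dF_v(w)\right)^2 .
\end{equation*}
For fixed $w$ I would split it $g_v$-orthogonally with respect to $v$, writing $w = \alpha v + u$ with $\alpha = g_v(v,w)/g_v(v,v)$ and $g_v(v,u)=0$ (possible precisely because $g_v(v,v)>0$). Using $dF_v(w) = g_v(v,w)/F(v) = \alpha F(v)$ and $g_v(w,w) = \alpha^2 F(v)^2 + g_v(u,u)$, the cross terms cancel and the right-hand side collapses to
\begin{equation*}
F(v)\,F_{ij}(v)w^iw^j = g_v(u,u).
\end{equation*}
Thus, for every $w$, the sign of $F_{ij}(v)w^iw^j$ is (since $F(v)>0$) exactly the sign of $g_v$ evaluated on the component $u$ of $w$ lying in the $g_v$-orthogonal complement $v^{\perp}$.

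It remains to control the sign of $g_v$ on $v^{\perp}$ from the signature hypotheses, which is where the two cases diverge and which I expect to be the only genuinely substantive point. For (i), if $g_v(u,u)>0$ held for some $u \in v^{\perp}$, then on the plane $\mathrm{span}\{v,u\}$ the form $g_v$ would be the diagonal form $\mathrm{diag}(F(v)^2,\,g_v(u,u))$ with two positive entries, hence positive definite, forcing $g_v$ to admit at least two positive eigenvalues and contradicting the hypothesis; therefore $g_v|_{v^{\perp}}$ is negative semidefinite, $g_v(u,u)\le 0$, and $F_{ij}(v)w^iw^j\le 0$. For (ii), positive semidefiniteness of $g_v$ gives $g_v(u,u)\ge 0$ directly, whence $F_{ij}(v)w^iw^j\ge 0$. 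As $w$ was arbitrary, this proves the two claimed semidefiniteness statements. The main obstacle is the bookkeeping around degeneracy and the sign of $F$: one must check that allowing $g_v$ to have a nontrivial kernel does not obstruct the orthogonal splitting (it does not, since $g_v(v,v)>0$ keeps $v\notin v^{\perp}$), and that the conclusions genuinely require $F(v)>0$ rather than merely $F(v)\neq 0$ — indeed, replacing $F$ by $-F$ leaves $g_v$ unchanged but reverses the Hessian, so positivity of the norm is essential.
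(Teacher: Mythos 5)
Your proof is correct and takes essentially the same route as the paper's: both rest on the identity $g_{ij}(v)=F_i(v)F_j(v)+F(v)F_{ij}(v)$ combined with a $g_v$-orthogonal splitting relative to $v$ (legitimate because $g_v(v,v)=F^2(v)>0$), reducing both claims to the sign of $g_v$ on the orthogonal complement of $v$, where in case (i) the one-positive-eigenvalue hypothesis forces negative semidefiniteness. If anything, your computation of $F_{ij}(v)w^iw^j$ for an \emph{arbitrary} $w$ via the decomposition $w=\alpha v+u$ is slightly tighter than the paper's evaluation only on $g_v$-orthogonal basis vectors, and your closing observation that $F(v)>0$ (not merely $F(v)\neq 0$) is genuinely needed — since replacing $F$ by $-F$ preserves $g_v$ but flips the Hessian — makes explicit an assumption the paper uses silently.
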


\begin{proof}

\begin{enumerate}
\item[\textit{(i)}] $\rightarrow :$ Assume $g_{ij}(v)$ has only one positive
eigenvalue. Using $g_{ij}(v)=F(v)F_{ij}(v)+F_{i}(v)F_{j}(v),$ we find for
any $u\in V:$%
\begin{equation}
F(v)F_{ij}(v)u^{i}u^{j}=g_{ij}(v)u^{i}u^{j}-(F_{i}(v)u^{i})^{2}.
\label{ineq_F_g}
\end{equation}%
As the signature of $g_{v}$ does not depend on the choice of the basis $%
\left\{ e_{i}\right\} _{i=\overline{0,n}}$, we can freely choose this basis.
For instance, we can choose an orthogonal basis for $g_{v},$\ with $e_{0}=v.$
Since $g_{v}(e_{0},e_{0})=g_{ij}(v)v^{i}v^{j}=F^{2}(v)>0,$ it follows from
the hypothesis that all the other diagonal entries $g_{ii}(v)$ are
nonpositive. Setting $u=e_{\alpha }$ for $\alpha \not=0,$ the orthogonality
condition is written, taking into account (\ref{F_w}), as $F_{i}(v)u^{i}=0;$
therefore, 
\begin{equation}
F_{ij}(v)u^{i}u^{j}=\dfrac{1}{F(v)}g_{ij}(v)u^{i}u^{j},  \label{ineq_F_g_1}
\end{equation}%
which entails $F_{ij}(v)u^{i}u^{j}\leq 0.$ But, on the other hand, we have: $%
F_{ij}(v)e_{0}^{i}e_{0}^{j}=F_{ij}(v)v^{i}v^{j}=0;$ that is, evaluating the
bilinear form $F_{ij}(v)$ on any basis vector $e_{i}$, we get nonpositive
values, i.e., $F_{ij}(v)$ is negative semidefinite for any $v\in \mathcal{T}$%
.

$\leftarrow :$ Conversely, assume $F_{ij}(v)$ is negative semidefinite.
Using the same $g_{v}$-orthogonal basis as above, we find from (\ref%
{ineq_F_g_1}) that, for $u=e_{\alpha }$, $\alpha =1,...,n,$ there holds $%
0\geq g_{ij}(v)u^{i}u^{j},$ i.e., $g_{ij}(v)$ has $n$ nonpositive
eigenvalues. But, on the other hand, $g_{v}(e_{0},e_{0})=F^{2}(v)>0,$ i.e.,
the eigenvector $e_{0}=v$ corresponds to a (unique) positive eigenvalue for $%
g.$

\item[\textit{(ii)}] is proven similarly, taking into account that, this
time, $F(v)F_{ij}(v)u^{i}u^{j}=g_{ij}(v)u^{i}u^{j}\geq 0.$
\end{enumerate}
\end{proof}

\bigskip

From the above Lemma and Proposition \ref{non-smooth case}, we immediately
find:

\begin{theorem}
\label{full_equiv}For a smooth, positively 1-homogeneous function $F:%
\mathcal{T}\rightarrow (0,\infty )$ defined on a convex conic domain $%
\mathcal{T}\subset V\backslash \{0\},$ the following statements are
equivalent:

\begin{enumerate}
\item[\textit{(i)}] $F$ - concave $\Leftrightarrow $ $g_{v}$ has exactly one
positive eigenvalue $\Leftrightarrow $ $F$ obeys the reverse triangle
inequality $\Leftrightarrow $ the set $B(1)=F^{-1}([1,\infty ))$ is convex;

\item[\textit{(ii)}] $F$ - convex $\Leftrightarrow $ $g_{v}$ is positive
semidefinite $\Leftrightarrow $ $F$ obeys the triangle inequality $%
\Leftrightarrow $ the set $F^{-1}([0,1])$ is convex (where, in the latter,
we have defined $F(0):=0$).
\end{enumerate}
\end{theorem}

In the above Theorem, the triangle inequality respectively, its reversed
counterpart, are generally non-strict.

In \cite{Javaloyes2019}, convexity of $F^{-1}([1,\infty ))$ was proven to be
(also) equivalent to $g_{v}$ being negative semidefinite on the set $%
F^{-1}(\{1\}).$

\bigskip

Finally, we can state (albeit in a somewhat redundant way):

\begin{theorem}
\label{prop_degenerate Finsler case}\textbf{(The degenerate-Lorentzian case):%
} Let $\mathcal{T}\subset V\backslash \{0\}$ be a convex conic domain and $F:%
\mathcal{T\rightarrow }(0,\infty )$ a smooth, positively 1-homogeneous
function. If the Hessian $g_{v}$ of $F^{2}$ has only one positive \textbf{\ }%
eigenvalue for all $v\in \mathcal{T}$, then, for any\textbf{\ }$v,w\in 
\mathcal{T},$ there hold:

\begin{enumerate}
\item[\textit{(i)}] the fundamental (or reverse Cauchy-Schwarz) inequality:%
\begin{equation}
dF_{v}(w)\geq F(w).  \label{reverse_CS_coord_free}
\end{equation}

\item[\textit{(ii)}] The reverse triangle inequality:%
\begin{equation}
F(v+w)\geq F(v)+F(w);  \label{reverse triangle}
\end{equation}%
If $g_{v}$ is everywhere nondegenerate (i.e., Lorentzian), then both the
above inequalities are strict.
\end{enumerate}
\end{theorem}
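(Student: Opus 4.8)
The plan is to obtain both inequalities from the negative semidefiniteness of the Hessian $(F_{ij}(v))$, which Lemma~\ref{signature_Hess(F)}(i) already supplies for every $v\in\mathcal{T}$ under our single-positive-eigenvalue hypothesis. The two levers I will use repeatedly are consequences of positive $1$-homogeneity: Euler's identity $F_i(v)v^i=F(v)$, i.e. $dF_v(v)=F(v)$, and the fact that $v$ lies in the radical of $(F_{ij}(v))$. Intuitively, the whole statement is the reversal of the standard positive-definite Finsler argument: where convexity of $F$ gives $dF_v(w)\leq F(w)$, here $F$ is \emph{concave} on $\mathcal{T}$ and the inequalities flip.

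For part (i), I would fix $v,w\in\mathcal{T}$ and use the convexity of $\mathcal{T}$ to ensure that the whole segment $\gamma(t)=v+t(w-v)$, $t\in[0,1]$, stays in $\mathcal{T}$. Then the scalar function $\psi(t):=F(\gamma(t))$ is smooth with $\psi''(t)=F_{ij}(\gamma(t))(w-v)^i(w-v)^j\leq 0$, so $\psi$ is concave on $[0,1]$. The tangent-line estimate at $t=0$, namely $\psi(1)\leq\psi(0)+\psi'(0)$, reads $F(w)\leq F(v)+dF_v(w-v)$; substituting $dF_v(v)=F(v)$ from Euler's identity collapses the right-hand side to $dF_v(w)$, which is exactly the reverse Cauchy-Schwarz inequality (\ref{reverse_CS_coord_free}).

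Part (ii) then follows formally. Since $\mathcal{T}$ is a convex cone we have $v+w\in\mathcal{T}$, so I may apply (i) twice with reference vector $v+w$ to get $dF_{v+w}(v)\geq F(v)$ and $dF_{v+w}(w)\geq F(w)$. Adding these and using linearity of $dF_{v+w}$ in its argument together with Euler's identity $dF_{v+w}(v+w)=F(v+w)$ yields the reverse triangle inequality (\ref{reverse triangle}) at once.

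The delicate point, and the step I expect to be the main obstacle, is the strictness claim in the nondegenerate (Lorentzian) case, which requires careful bookkeeping of the equality conditions. Equality in the concavity estimate of (i) forces $\psi$ to be affine on $[0,1]$, hence in particular $\psi''(0)=F_{ij}(v)(w-v)^i(w-v)^j=0$; but when $g_v$ is Lorentzian the radical of $(F_{ij}(v))$ is exactly the line through $v$ (recalled after (\ref{Hess(F)_ineq})), so $w-v$ is collinear with $v$ and thus $v,w$ are collinear. For (ii), equality would require equality in \emph{both} applications of (i), making $v$ and $w$ each collinear with $v+w$, and therefore collinear with each other. The care needed is to invoke the one-dimensionality of the radical correctly and to confirm that no non-collinear equality slips through along the interior of the segment.
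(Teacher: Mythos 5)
Your proof is correct and is essentially the paper's own argument: Lemma~\ref{signature_Hess(F)} supplies the negative semidefiniteness of $(F_{ij})$, a second-order (concavity) estimate along a segment gives (i), Euler's identity plus two applications of (i) with reference vector $v+w$ give (ii), and strictness follows from the radical of $(F_{ij}(v))$ being spanned by $v$ in the nondegenerate case. The only difference is minor and in your favor: you run the estimate along the segment from $v$ to $w$, which stays in $\mathcal{T}$ by convexity, so (i) is obtained for \emph{all} pairs $v,w\in \mathcal{T}$, whereas the paper's Taylor expansion takes $w=v+u$ with $u\in \mathcal{T}$ (i.e., only covers $w-v\in \mathcal{T}$), and your unrestricted version of (i) is precisely what the two applications inside (ii), where the increment $v-(v+w)=-w$ does not lie in $\mathcal{T}$, actually require.
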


\begin{proof}

\begin{enumerate}
\item[\textit{(i)}] The technique follows roughly the same steps as in the
positive definite case (see, e.g., \cite{Bao}, p. 8-9). Consider two
arbitrary vectors $u,v\in \mathcal{T}.$ Since $\mathcal{T}$ is convex, it
follows that $\dfrac{u+v}{2}\in \mathcal{T};$ but as it is also conic, we
find $u+v\in \mathcal{T},$ which means that it makes sense to speak about $%
F(u+v)$. Now, perform a Taylor expansion around $v$, with the remainder in
Lagrange form:%
\begin{equation}
F(u+v)=F(v)+F_{i}(v)u^{i}+\dfrac{1}{2}F_{ij}(v+\varepsilon u)u^{i}u^{j}.
\label{Taylor1}
\end{equation}%
From the above Lemma, we obtain that $F_{ij}$ is negative semidefinite, that
is, $F_{ij}(v+\varepsilon u)u^{i}u^{j}\leq 0$ and therefore,%
\begin{equation}
F(u+v)\leq F(v)+F_{i}(v)u^{i}.  \label{aux}
\end{equation}

Then, denoting $w:=u+v,$ the above becomes $F(w)\leq
F(v)+F_{i}(v)(w^{i}-v^{i}),$ which, using the 1-homogeneity of $F,$ leads
to: $F(w)\leq F_{i}(v)w^{i},$ which is the coordinate form of \textit{(i).}

\item[\textit{(ii)}] The reverse triangle inequality follows immediately
from Theorem \ref{full_equiv}.

\textit{Strictness:}\ Assume $g_{v}$ is nondegenerate. Then, the equality $%
F_{ij}(v+\varepsilon u)u^{i}u^{j}=0$ can only happen when $v$ and $u$ are
collinear; this leads to the strictness (\ref{aux}) and consequently, of (%
\ref{reverse_CS_coord_free}). Further, assume $v,w\in \mathcal{T}$ are
non-collinear and set $\xi :=v+w$. Then, 
\begin{equation*}
F(\xi )=F_{i}(\xi )\xi ^{i}=F_{i}(\xi )(v^{i}+w^{i})=F_{i}(\xi
)v^{i}+F_{i}(\xi )w^{i}.
\end{equation*}%
Applying the reverse Cauchy-Schwarz inequality twice in the right hand side,
we get: $F(v+w)=F(\xi )>F(v)+F(w),$ i.e., the reverse triangle inequality is
strict.
\end{enumerate}
\end{proof}

\bigskip

Using (\ref{F_w}), the fundamental inequality can be equivalently written as:%
\begin{equation}
F_{i}(v)w^{i}\geq F(w),  \label{CS_coords}
\end{equation}%
or as:%
\begin{equation}
g_{v}(v,w)\geq F(v)F(w).  \label{reverse_CS_gv}
\end{equation}

\bigskip

\textbf{Example. }To convince ourselves that the reverse Cauchy-Schwarz
inequality becomes non-strict if $F\ $is degenerate, consider 
\begin{equation*}
F:\mathcal{T}\rightarrow \mathbb{R},~~F(v)=\sqrt{\left( v^{0}\right)
^{2}-\left( v^{1}\right) ^{2}-....-\left( v^{k}\right) ^{2}};
\end{equation*}%
here, $n>3,~k\leq n-2$ and the cone $\mathcal{T}\subset \mathbb{R}^{n+1}$ is
the Cartesian product $\mathcal{T}=\mathcal{T}_{k}\times \mathbb{R}^{n-k},$
where $\mathcal{T}_{k}=\{u\in \mathbb{R}^{k+1}~|~\left( u^{0}\right)
^{2}-\left( u^{1}\right) ^{2}-...-\left( u^{k}\right) ^{2}>0,~u^{0}>0\}.$
Since $\mathcal{T}_{k}$ is a convex cone in $\mathbb{R}^{k+1}$, it follows
that $\mathcal{T}$ is also convex. The corresponding metric tensor is $%
g_{v}=diag(1,-1,...,-1,0,...,0),$ where the number of $-1$ entries is $k.$
Picking $v=(1,0,0,....0,1,0)$ and $w=(1,0,0,....,0,1),$ we get: $%
g_{v}(v,w)=1,$ $F(v)=1$ and $F(w)=1,$ which means that $g_{v}(v,w)=F(v)F(w),$
while, obviously, $v$ and $w$ are not collinear.

In the maximally degenerate case, when $g_{v}$ has everywhere signature $%
(+,0,0,...,0),$ $(F_{ij}(v))$ is the zero matrix, hence, (\ref%
{reverse_CS_coord_free}) and (\ref{reverse triangle}) become equalities for 
\textit{all }$v,w\in \mathcal{T}.$

\bigskip

\textbf{Remark. }Generally, if $F$ is not smooth, it makes no sense to ask
about the (reverse)\ fundamental inequality, as it involves $dF_{v}$. Yet,
if for some fixed $v\in \mathcal{T},$ the function $F$ is concave and $%
\mathcal{C}^{2}$-smooth on a convex neighborhood $U\subset \mathcal{T}$ of $%
v $ then, any $w\in U,$ then (\ref{Taylor1})-(\ref{aux}) still hold for $v$
and $u=w-v;$ that is, the reverse fundamental inequality will hold on the
neighborhood $U.$

\bigskip

Similarly, it holds:

\begin{proposition}
\label{positive semidefinite case}\textbf{(The positive semidefinite case):}
Let $\mathcal{T}\subset V\backslash \{0\}$ be a convex conic domain and $F:%
\mathcal{T\rightarrow }(0,\infty ),$ a smooth positively 1-homogeneous
function such that the Hessian $g_{v}$ of $F^{2}$ is positive semidefinite
for all $v\in \mathcal{T}$. Then, the Cauchy-Schwarz inequality (\ref{pos
def CS Finsler})\ and the triangle inequality (\ref{pos def triangle ineq
Finsler}) still hold, but they are generally non-strict.
\end{proposition}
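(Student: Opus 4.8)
The plan is to mirror the proof of Theorem~\ref{prop_degenerate Finsler case} almost verbatim, simply exchanging part (i) of Lemma~\ref{signature_Hess(F)} for part (ii), which flips every inequality sign. The structural ingredients are the same: convexity together with the conic property of $\mathcal{T}$ guarantees $u+v\in\mathcal{T}$ whenever $u,v\in\mathcal{T}$ (so that $F(u+v)$ is defined), and Euler's relation for a positively $1$-homogeneous $F$ gives $F_i(v)v^i=F(v)$.

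First I would establish the Cauchy-Schwarz inequality (\ref{pos def CS Finsler}) in its coordinate form (\ref{pos_def_CS_FInsler_coords}). Fixing $u,v\in\mathcal{T}$, I would Taylor-expand $F$ around $v$ with the Lagrange remainder, exactly as in (\ref{Taylor1}):
\begin{equation*}
F(u+v)=F(v)+F_i(v)u^i+\tfrac{1}{2}F_{ij}(v+\varepsilon u)u^iu^j.
\end{equation*}
By hypothesis $g_v$ is positive semidefinite at every point of $\mathcal{T}$, and convexity ensures $v+\varepsilon u\in\mathcal{T}$ for $\varepsilon\in[0,1]$, so Lemma~\ref{signature_Hess(F)}(ii) yields that $(F_{ij}(v+\varepsilon u))$ is positive semidefinite, whence $F_{ij}(v+\varepsilon u)u^iu^j\geq 0$ and $F(u+v)\geq F(v)+F_i(v)u^i$. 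Setting $w:=u+v$ and using $F_i(v)v^i=F(v)$ then gives $F(w)\geq F_i(v)w^i$, i.e.\ $F_i(v)w^i\leq F(w)$; via (\ref{F_w}) this rewrites as $g_v(v,w)\leq F(v)F(w)$.

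Next I would deduce the triangle inequality (\ref{pos def triangle ineq Finsler}) from the Cauchy-Schwarz inequality, again copying the argument in Theorem~\ref{prop_degenerate Finsler case}(ii). For $v,w\in\mathcal{T}$ set $\xi:=v+w\in\mathcal{T}$; by $1$-homogeneity $F(\xi)=F_i(\xi)v^i+F_i(\xi)w^i$, and applying the just-proven inequality twice, $F_i(\xi)v^i\leq F(v)$ and $F_i(\xi)w^i\leq F(w)$, yields $F(v+w)\leq F(v)+F(w)$.

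The only genuinely new point — the one I expect to require a remark rather than a computation — is why strictness now fails, so this is the part to treat with care. In the nondegenerate (positive definite) case, equality in $F_{ij}(v+\varepsilon u)u^iu^j=0$ forces $u$ collinear to $v$ because the radical of $(F_{ij})$ is exactly one-dimensional. When $g_v$ is merely positive semidefinite, the radical of $(F_{ij})$ may be of higher dimension, so the remainder term can vanish for vectors $u$ not collinear with $v$; consequently equality in both inequalities can occur for non-collinear $v,w$, precisely in parallel with the degenerate Lorentzian example already exhibited after Theorem~\ref{prop_degenerate Finsler case}. I would close by noting that the positive definite subcase recovers the strict inequalities.
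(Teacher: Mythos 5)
Your proposal is correct and follows exactly the route the paper takes: the paper's own proof simply declares it ``identical to the one of Theorem~\ref{prop_degenerate Finsler case}'', with Lemma~\ref{signature_Hess(F)}(ii) replacing part (i) so that the Lagrange remainder in (\ref{Taylor1}) becomes nonnegative and all inequalities flip. Your added explanation of why strictness fails (a possibly higher-dimensional radical of $(F_{ij})$) is a sound elaboration of what the paper leaves implicit in the phrase ``generally non-strict''.
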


The proof is identical to the one of Proposition \ref{prop_degenerate
Finsler case}, with the only difference that, in (\ref{Taylor1}), the matrix 
$F_{ij}(v+\varepsilon u)$ is positive semidefinite, which leads to the
opposite inequality: $F(u+v)\geq F(v)+F_{i}(v)u^{i},$ hence, to the usual
(non-reversed)\ Cauchy-Schwarz and triangle inequalities.

\subsection{Refinements of the Finslerian reverse triangle inequality\label%
{sec: refinements triangle}}

Here are two refinements of the reverse triangle inequality, holding for
(possibly degenerate, or non-smooth) Lorentz-Finsler functions.

\begin{theorem}
If a positively 1-homogeneous\textbf{\ }function $F:\mathcal{T}\rightarrow
(0,\infty ),$ defined on a convex conic domain $\mathcal{T}\subset
V\backslash \{0\},$ obeys the reverse triangle inequality, then, for all $%
v,w\in \mathcal{T}$ and for any $0<a\leq b,$ we have:%
\begin{equation}
a\left[ F(v+w)-F(v)-F(w)\right] \leq F(av+bw)-aF(v)-bF(w)\leq
b[F(v+w)-F(v)-F(w)].  \label{reverse triangle_1}
\end{equation}%
If the reverse triangle inequality of $F$ is strict, then the above
inequalities are also strict.
\end{theorem}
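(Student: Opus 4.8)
The plan is to exploit the positive 1-homogeneity of $F$ to collapse the two-parameter statement into a one-variable convexity problem. First I would fix $v,w\in\mathcal{T}$ and introduce
\begin{equation*}
\phi(s):=F(v+sw)-F(v)-sF(w),\qquad s\ge 0 .
\end{equation*}
Writing $av+bw=a\bigl(v+(b/a)w\bigr)$ and using $F(\lambda u)=\lambda F(u)$, one checks that $F(av+bw)-aF(v)-bF(w)=a\,\phi(b/a)$, while $F(v+w)-F(v)-F(w)=\phi(1)$. Setting $s:=b/a\ge 1$ and dividing the desired chain (\ref{reverse triangle_1}) by $a>0$, the whole statement becomes equivalent to
\begin{equation*}
\phi(1)\le \phi(s)\le s\,\phi(1),\qquad s\ge 1 .
\end{equation*}
Note $\phi(0)=0$, and that $v+sw\in\mathcal{T}$ for every $s\ge 0$: for $s>0$ it is a positive multiple of the convex combination $\tfrac{1}{1+s}v+\tfrac{s}{1+s}w$, hence lies in $\mathcal{T}$ by convexity together with the conic property, while $s=0$ gives $v\in\mathcal{T}$.

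The left inequality follows from monotonicity. Differentiating gives $\phi'(s)=dF_{v+sw}(w)-F(w)$, and since $v+sw\in\mathcal{T}$, the reverse Cauchy--Schwarz inequality (\ref{reverse_CS_coord_free}) of Theorem~\ref{prop_degenerate Finsler case} yields $dF_{v+sw}(w)\ge F(w)$, i.e.\ $\phi'(s)\ge 0$. Hence $\phi$ is non-decreasing and $\phi(s)\ge\phi(1)$ for $s\ge 1$.

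The right inequality follows from concavity. A second differentiation gives $\phi''(s)=F_{ij}(v+sw)\,w^iw^j$, which is $\le 0$ because, under the single-positive-eigenvalue hypothesis, Lemma~\ref{signature_Hess(F)}(i) makes $(F_{ij})$ negative semidefinite. Thus $\phi$ is concave with $\phi(0)=0$, and for such functions $s\mapsto\phi(s)/s$ is non-increasing (the standard secant-slope argument: for $0<s_1\le s_2$, concavity and $\phi(0)=0$ give $\phi(s_1)\ge(s_1/s_2)\phi(s_2)$, hence $\phi(s_1)/s_1\ge\phi(s_2)/s_2$). Comparing $s\ge 1$ with $s=1$ gives $\phi(s)/s\le\phi(1)$, i.e.\ $\phi(s)\le s\,\phi(1)$. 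Multiplying both bounds by $a$ and re-expanding via homogeneity reproduces (\ref{reverse triangle_1}).

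For strictness in the nondegenerate (Lorentzian) case I would argue that when $v,w$ are not collinear and $a<b$ (so $s>1$) both steps become strict: the reverse Cauchy--Schwarz inequality is then strict, so $\phi$ is strictly increasing, and equality in (\ref{Hess(F)_ineq}) forces $w$ collinear with $v+sw$, hence with $v$; thus $\phi''<0$, $\phi$ is strictly concave, and $\phi(s)/s<\phi(1)$. If $v,w$ are collinear then $\phi\equiv 0$ and all relations are equalities, and likewise $a=b$ forces equality, so these degenerate cases must be excluded from the strictness claim. The main obstacle is not any single computation --- the derivatives of $\phi$ are routine --- but rather identifying the correct one-variable reduction and invoking the concave-function lemma that $\phi(s)/s$ is non-increasing, together with careful bookkeeping of exactly when (non-collinearity together with $a<b$) the inequalities can be promoted to strict ones.
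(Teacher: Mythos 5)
Your proof is correct, but it takes a genuinely different route from the paper's. The paper's proof is purely algebraic: after cancelling terms, each of the two inequalities is rewritten, via positive homogeneity ($aF(v+w)=F(av+aw)$, $(b-a)F(w)=F(bw-aw)$), as a single instance of the already-established reverse triangle inequality (\ref{reverse triangle}) --- e.g.\ the left one becomes $F(av+aw)+F(bw-aw)\leq F(av+bw)$, and the right one becomes $F(av+bw)+F(bv-av)\leq F(bv+bw)$. You instead collapse everything onto the one-variable function $\phi(s)=F(v+sw)-F(v)-sF(w)$ and never invoke the reverse triangle inequality: the left bound comes from monotonicity ($\phi'\geq 0$ via the fundamental inequality (\ref{reverse_CS_coord_free})), the right bound from concavity ($\phi''\leq 0$ via Lemma \ref{signature_Hess(F)}(i)) together with the secant-slope lemma for concave functions vanishing at $0$. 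What the paper's route buys is brevity --- two lines, no differentiation, everything delegated to Theorem \ref{prop_degenerate Finsler case}(ii). What your route buys is a sharper handle on the equality cases: the paper asserts strictness in the nondegenerate case without proof and, as literally stated, without the necessary exclusions, whereas your analysis makes explicit that strictness requires both $a<b$ and $v,w$ non-collinear (if $v,w$ are collinear, or $a=b$, all three quantities coincide), and derives it cleanly from $\phi'>0$ and $\phi''<0$. One small point worth stating explicitly in your write-up: the fundamental inequality is applied at the base point $v+sw$ with argument $w$, both of which lie in $\mathcal{T}$ by your convexity-plus-conicity observation, so the invocation of (\ref{reverse_CS_coord_free}) is legitimate for every $s\geq 0$.
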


\begin{proof}
The first inequality is equivalent (after canceling out the $-aF(v)$ terms
and grouping the $F(w)$ ones into the left hand side) to: 
\begin{equation*}
aF(v+w)+(b-a)F(w)\leq F(av+bw).
\end{equation*}

But, since $F$ is positively homogeneous and $b-a\geq 0,$ we get: $%
aF(v+w)=F(av+aw)$ and $(b-a)F(w)=F(bw-aw)$. Then, the reverse triangle
inequality yields: 
\begin{equation*}
aF(v+w)+(b-a)F(w)=F(av+aw)+F(bw-aw)\leq F(av+bw)
\end{equation*}%
as required.

The second inequality is proven in a completely similar way to be equivalent
to: $F(av+bw)+F(bv-av)\leq F(bv+bw),$ which, again, holds by virtue of the
reverse triangle inequality.
\end{proof}

\begin{proposition}
If a continuous, positively 1-homogeneous function $F:\mathcal{T}\rightarrow 
\mathbb{R}$ defined on a convex conic domain $\mathcal{T}\subset V\backslash
\{0\},$ obeys the reverse triangle inequality, then: 
\begin{equation}
F(v)+F(w)\leq 2\int_{0}^{1}F(tv+(1-t)w)dt\leq F\left( v+w\right) ,
\label{reverse triangle_3}
\end{equation}%
for all $v,w\in \mathcal{T\subset }$ $V\backslash \{0\}$.
\end{proposition}

\begin{proof}
We use the same idea as in \cite{Min-Pal}. Using the reverse triangle
inequality and homogeneity, we find: 
\begin{equation*}
F(tv+(1-t)w))\geq F(tv)+F((1-t)w))=tF(v)+(1-t)F(w),
\end{equation*}%
for every $v,w\in \mathcal{T}$, $t\in \lbrack 0,1]$. Integrating with
respect to $t,$ from $0$ to $1$, we obtain: 
\begin{equation*}
\frac{F(v)+F(w)}{2}\leq \int_{0}^{1}F(tv+(1-t)w)dt,
\end{equation*}%
i.e., the first inequality (\ref{reverse triangle_3}). Similarly, using the
reverse triangle inequality, we have $F(v+w)=F{\large (}v+(1-t)w+(1-t)v+tw%
{\large )}\geq F(tv+(1-t)w)+F((1-t)v+tw)$. Integrating from $0$ to $1$, we
deduce: $F(v+w)\geq
\int_{0}^{1}F(tv+(1-t)w)dt+\int_{0}^{1}F((1-t)v+tw)dt=2%
\int_{0}^{1}F(tv+(1-t)w)dt$, which is just the second inequality (\ref%
{reverse triangle_3}).
\end{proof}

The two above results trivially hold when one of the vectors $v,w$ is zero.

\section{Lorentz-Finsler norms and their inequalities}

\label{sec:ex}

The set of Lorentz-Finsler norms is rich in interesting examples whose
reverse Cauchy-Schwarz or reverse\textbf{\ }triangle inequality yield
immediately famous inequalities from the literature and open a pathway to
reveal further interesting inequalities.

We already pointed out in Section \ref{sec:classineq} that, for the simplest
example of Lorentz-Finsler structure on $\mathbb{R}^{n+1}$, the Minkowski
metric $F(v)=\sqrt{\eta _{ij}v^{i}v^{j}}$, for which $\mathcal{T=}\left\{
v\in V~|~\eta _{ij}v^{i}v^{j}>0,v^{0}>0\right\} $, its reverse
Cauchy-Schwarz inequality led directly to Acz\'{e}l's inequality (\ref{1_1}).

In the following, we explore some nontrivial Finslerian cases.

\subsection{Popoviciu's inequality\label{Section_Popoviciu}}

\begin{proposition}
Let $\mathcal{T}\subset \mathbb{R}^{n+1}$ be the conic domain: 
\begin{equation*}
\mathcal{T}:=\{v\in \mathbb{R}^{n+1}~|~v^{0},v^{1},...,v^{n}>0,\left(
v^{0}\right) ^{p}-\left( v^{1}\right) ^{p}-...-\left( v^{n}\right) ^{p}>0\}.
\end{equation*}%
Moreover let $F:\mathcal{T}\rightarrow \mathbb{R}^{+}$ be the
Lorentz-Finsler structure defined by 
\begin{equation}
F(v)=H(v)^{\frac{1}{p}},\quad H(v)=\left( v^{0}\right) ^{p}-\left(
v^{1}\right) ^{p}-...-\left( v^{n}\right) ^{p}\,,  \label{p-pseudo-norm}
\end{equation}%
where $p>1$. Then:

\begin{enumerate}
\item[\textit{(i)}] the fundamental inequality $F_{i}(v)w^{i}\geq F(w)$ is
Popoviciu's inequality: 
\begin{equation}
\eta _{ij}a^{i}b^{j}\geq \left[ (a^{0})^{q}-\left( a^{1}\right)
^{q}-...-\left( a^{n}\right) ^{q}\right] ^{\frac{1}{q}}\left[ \left(
b^{0}\right) ^{p}-\left( b^{1}\right) ^{p}-...-\left( b^{n}\right) ^{p}%
\right] ^{\frac{1}{p}},\ \forall a,b\in \mathcal{T}\,,
\label{Popoviciu inequality}
\end{equation}%
where $\dfrac{1}{p}+\dfrac{1}{q}=1$;

\item[\textit{(ii)}] the reverse triangle inequality of $F$ is Bellman's
inequality: 
\begin{equation}
\left( v_{0}^{p}-v_{1}^{p}-...-v_{n}^{p}\right) ^{1/p}+\left(
w_{0}^{p}-w_{1}^{p}-...-w_{n}^{p}\right) ^{1/p}\leq \lbrack
(v_{0}+w_{0})^{p}-(v_{1}+w_{1})^{p}-...-(v_{n}+w_{n})^{p}]^{1/p}.
\label{Bellmann}
\end{equation}
\end{enumerate}
\end{proposition}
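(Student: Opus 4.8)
The plan is to verify that $F$ meets the hypotheses of Theorem~\ref{prop_degenerate Finsler case} and then simply to transcribe its two conclusions. Thus I would show that $\mathcal{T}$ is a convex conic domain, that $F\colon\mathcal{T}\to(0,\infty)$ is smooth and positively $1$-homogeneous, and -- the substantive point -- that the fundamental tensor $g_{v}=\tfrac{1}{2}Hess(F^{2})$ has a single positive eigenvalue at each $v\in\mathcal{T}$. Once this is in place, Theorem~\ref{prop_degenerate Finsler case} delivers the reverse Cauchy--Schwarz inequality $F_{i}(v)w^{i}\ge F(w)$ and the reverse triangle inequality $F(v+w)\ge F(v)+F(w)$, and all that remains is to rewrite these two facts in the required algebraic shape.

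The elementary hypotheses are quick to dispatch. Homogeneity follows from $H(\lambda v)=\lambda^{p}H(v)$, so $F(\lambda v)=\lambda F(v)$; smoothness and positivity hold because $H$ is a polynomial strictly positive on $\mathcal{T}$ while $x\mapsto x^{1/p}$ is smooth on $(0,\infty)$. For convexity I would recast the constraint $H(v)>0$ as $v^{0}>\lVert\vec{v}\rVert_{p}$ with $\lVert\vec{v}\rVert_{p}=(\sum_{\alpha}(v^{\alpha})^{p})^{1/p}$; since the $p$-norm is convex for $p\ge1$, the region $\{v^{0}>\lVert\vec{v}\rVert_{p}\}$ is the open epigraph of a convex function, and its intersection with the positive orthant is convex and conic.

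The main obstacle is the signature of $g_{v}$, and here is where I expect the real work. Setting $t=2/p>0$ and $H_{i}=\partial_{i}H$, $H_{ij}=\partial_{i}\partial_{j}H$, differentiating $F^{2}=H^{t}$ twice gives $g_{ij}=\tfrac{t}{2}H^{t-2}[(t-1)H_{i}H_{j}+HH_{ij}]$; since $\tfrac{t}{2}H^{t-2}>0$, the signature of $g_{v}$ coincides with that of $M_{ij}:=(t-1)H_{i}H_{j}+HH_{ij}$. To handle the rank-one term I would invoke Euler's identities $H_{i}v^{i}=pH$ and $H_{ij}v^{j}=(p-1)H_{i}$, which combine to give $M_{ij}v^{j}=HH_{i}$; consequently $M(v,v)=pH^{2}>0$, while for every $u$ in the hyperplane $\Pi:=\{u:H_{i}u^{i}=0\}$ one has $M(v,u)=0$ and $M(u,u)=H\,H_{ij}u^{i}u^{j}$. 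This exhibits $V=\langle v\rangle\oplus\Pi$ as an $M$-orthogonal splitting, so it remains to prove $H_{ij}u^{i}u^{j}<0$ for $0\ne u\in\Pi$. This is the crux: the relation $u\in\Pi$ reads $(v^{0})^{p-1}u^{0}=\sum_{\alpha}(v^{\alpha})^{p-1}u^{\alpha}$, and Cauchy--Schwarz applied to the splitting $(v^{\alpha})^{p-1}=(v^{\alpha})^{p/2}(v^{\alpha})^{(p-2)/2}$ gives $(v^{0})^{2(p-1)}(u^{0})^{2}\le(\sum_{\alpha}(v^{\alpha})^{p})\sum_{\alpha}(v^{\alpha})^{p-2}(u^{\alpha})^{2}$; inserting the defining inequality $\sum_{\alpha}(v^{\alpha})^{p}<(v^{0})^{p}$ (that is, $H(v)>0$) then forces $(v^{0})^{p-2}(u^{0})^{2}<\sum_{\alpha}(v^{\alpha})^{p-2}(u^{\alpha})^{2}$, i.e.\ $H_{ij}u^{i}u^{j}<0$ (the case $u^{0}=0$ being immediate). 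Hence $M$, and therefore $g_{v}$, is in fact Lorentzian -- in particular it has a single positive eigenvalue -- and Theorem~\ref{prop_degenerate Finsler case} applies (with strict inequalities).

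Finally I would unwind the two conclusions. Statement (ii) is immediate: inserting the definition of $F$ into $F(v+w)\ge F(v)+F(w)$ -- legitimate because $\mathcal{T}$ is convex and conic, so $v+w\in\mathcal{T}$ -- is verbatim Bellman's inequality~(\ref{Bellmann}). For (i), from $F=H^{1/p}$ I get $F_{i}(v)=\tfrac{1}{p}H^{1/p-1}H_{i}$, so that $F_{i}(v)w^{i}=H(v)^{-1/q}[(v^{0})^{p-1}w^{0}-\sum_{\alpha}(v^{\alpha})^{p-1}w^{\alpha}]=\eta_{ij}a^{i}w^{j}$, where $a^{i}:=H(v)^{-1/q}(v^{i})^{p-1}$ and I have used $1/p-1=-1/q$. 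A short computation with $(p-1)q=p$ then yields $(a^{0})^{q}-\sum_{\alpha}(a^{\alpha})^{q}=H(v)^{-1}H(v)=1$, so multiplying the fundamental inequality $\eta_{ij}a^{i}w^{j}\ge F(w)=H(w)^{1/p}$ by this unit factor reproduces Popoviciu's inequality~(\ref{Popoviciu inequality}) with $b=w$. Since $v\mapsto a(v)$ maps $\mathcal{T}$ onto the normalized slice $\{(a^{0})^{q}-\sum_{\alpha}(a^{\alpha})^{q}=1\}$ of the corresponding $q$-cone and both sides of Popoviciu are $1$-homogeneous in $a$, the inequality follows for all admissible $a,b$.
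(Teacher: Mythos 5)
Your proposal is correct, and its overall skeleton matches the paper's --- verify the hypotheses of Theorem \ref{prop_degenerate Finsler case} on the convex cone $\mathcal{T}$, then rewrite its two conclusions --- but it diverges genuinely at the crux step, the Lorentzian signature of $g_{v}$. The paper computes the Hessian of $H$, namely $H_{ij}(v)=p(p-1)\,\mathrm{diag}\bigl((v^{0})^{p-2},-(v^{1})^{p-2},\dots ,-(v^{n})^{p-2}\bigr)$, observes that it is manifestly Lorentzian on $\mathcal{T}$, and then transfers this signature to $g_{ij}$ by invoking the appendix result on $m$-th root metrics (Proposition \ref{prop:signHg}). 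You instead argue from scratch: you reduce to the signature of $M_{ij}=(t-1)H_{i}H_{j}+HH_{ij}$, use Euler's identities to produce the $M$-orthogonal splitting $\mathbb{R}^{n+1}=\langle v\rangle \oplus \ker dH_{v}$, and prove negative definiteness on $\ker dH_{v}$ via Cauchy--Schwarz combined with the defining constraint $H(v)>0$; in effect you re-prove, for this example, the special case of Proposition \ref{prop:signHg} that the paper delegates to the appendix. The paper's route is shorter and reuses a lemma that also serves its Berwald--Mo\'{o}r and bimetric examples; yours is self-contained, avoids the appendix entirely, and makes explicit exactly where membership in $\mathcal{T}$ enters (in the strict Cauchy--Schwarz step). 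Two minor remarks: in part (i) the paper substitutes $a^{i}=(v^{i})^{p-1}$ directly, so that $\bigl[(a^{0})^{q}-\dots \bigr]^{1/q}=H(v)^{1/q}$ falls out at once, whereas you take the normalized $a^{i}=H(v)^{-1/q}(v^{i})^{p-1}$ and recover the general case by homogeneity and surjectivity --- both are valid, the paper's being slightly more economical; and note that $H$ is a polynomial only for integer $p$ --- for general $p>1$ it is merely smooth on the positive orthant, which is all your argument actually needs (your convexity argument via the epigraph of the $p$-norm is fine, and in fact stated more carefully than in the paper).
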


\begin{proof}

\begin{enumerate}
\item[\textit{(i)}] To see that the fundamental inequality holds, we realise
that the Hessian of $H$ is\textbf{\ }%
\begin{equation*}
H_{ij}(v)=p(p-1)diag\left(
(v^{0})^{p-2},-(v^{1})^{p-2},....,-(v^{n})^{p-2}\right) .
\end{equation*}%
and thus has Lorentzian signature on $\mathcal{T}$. Moreover, $\mathcal{T}$
is convex (but not strictly convex), as it can be identified with the
epigraph of the convex function $\tilde{H}%
(v^{1},...,v^{n})=(v^{1})^{p}+....+\left( v^{n}\right) ^{p}$, defined for
all $v^{\alpha }>0$.

By Proposition~\ref{prop:signHg} (see Appendix), we obtain that $g_{ij}(v)$
is Lorentzian for all $v\in \mathcal{T}$ and hence, the fundamental
inequality inequality (\ref{CS_coords}) holds.

By a straightforward calculation, we get: 
\begin{equation*}
F_{i}(v)=F(v)^{1-p}\eta _{ij}(v^{j})^{p-1};
\end{equation*}%
therefore, the fundamental inequality $F_{i}(v)w^{i}\geq F(w)$ becomes: 
\begin{equation*}
\eta _{ij}(v^{j})^{p-1}w^{i}\geq F(v)^{p-1}F(w)=H(v)^{\frac{p-1}{p}}H(w)^{%
\frac{1}{p}}.
\end{equation*}%
Evaluating the above equation with the following notation: 
\begin{equation*}
q:=\dfrac{p}{p-1},\quad a^{i}:=\left( v^{i}\right) ^{p-1},\quad \
b^{j}:=w^{j},
\end{equation*}%
(in particular, $\dfrac{1}{p}+\dfrac{1}{q}=1$), yields Popoviciu's
inequality.

\item[\textit{(ii)}] is obvious.
\end{enumerate}
\end{proof}

\begin{remark}
Similarly, H\"{o}lder's inequality 
\begin{equation*}
\delta _{ij}a^{i}b^{j}\leq \left[ \left( a^{0}\right) ^{q}+...+\left(
a^{n}\right) ^{q}\right] ^{\frac{1}{q}}\left[ \left( b^{0}\right)
^{p}+...+\left( b^{n}\right) ^{p}\right] ^{\frac{1}{p}},\ \ \ \ \ \forall
a^{i},b^{i}>0,i=\overline{0,n},
\end{equation*}%
where $\delta _{ij}$ is the Kronecker symbol, can be treated as fundamental
inequality of the Finsler norm $F(v)=[\left( v^{0}\right) ^{p}+...+\left(
v^{n}\right) ^{p}]^{\tfrac{1}{p}}$ - which is positive definite for $%
v^{i}>0, $ $i=\overline{0,n}$ and Minkowski's inequality%
\begin{equation*}
\left[ \left( a^{0}+b^{0}\right) ^{p}+...+\left( a^{n}+b^{n}\right) ^{p}%
\right] ^{\frac{1}{p}}\leq \left[ \left( a^{0}\right) ^{p}+...+\left(
a^{n}\right) ^{p}\right] ^{\frac{1}{p}}+\left[ \left( b^{0}\right)
^{p}+...+\left( b^{n}\right) ^{p}\right] ^{\frac{1}{p}},
\end{equation*}%
$\forall a^{i},b^{i}>0,i=\overline{0,n},p>1$ is just the corresponding
triangle inequality.
\end{remark}

\subsection{The arithmetic-geometric mean inequality\label{Section_BM}}

\begin{proposition}
Let $\mathcal{T}\subset \mathbb{R}^{n+1}$ be the convex conic domain 
\begin{equation*}
\mathcal{T}:=\left\{ v\in \mathbb{R}^{n+1}~|~v^{0},v^{1},...,v^{n}>0\right\}
\subset \mathbb{R}^{n+1}\,.
\end{equation*}%
Moreover let $F:\mathcal{T}\rightarrow \mathbb{R}^{+}$ be the Berwald-Mo\'{o}%
r Finsler structure defined by 
\begin{equation*}
F(v)=(v^{0}v^{1}...v^{n})^{\tfrac{1}{n+1}}\,.
\end{equation*}%
Then, the fundamental inequality $F_{i}(v)w^{i}\geq F(w)$ is the
aritmetic-geometric mean inequality: 
\begin{equation}
\dfrac{a_{0}+....+a_{n}}{n+1}\geq \left( a_{0}a_{1}...a_{n}\right) ^{\tfrac{1%
}{n+1}},~\ \ \forall a_{i}\in \mathbb{R}_{+}^{\ast }\,.
\label{arithmetic-geometric mean}
\end{equation}
\end{proposition}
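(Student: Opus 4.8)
The plan is to invoke Theorem~\ref{prop_degenerate Finsler case}, following the same scheme as in the proof of Popoviciu's inequality. Once the fundamental tensor $g_v$ of $F$ is shown to have a single positive eigenvalue throughout $\mathcal{T}$, the reverse Cauchy-Schwarz (fundamental) inequality $F_i(v)w^i\ge F(w)$ holds for all $v,w\in\mathcal{T}$, and it only remains to read off what it says for a well-chosen reference vector $v$. The standing hypotheses are immediate: $F(v)=(v^0\cdots v^n)^{1/(n+1)}$ is smooth and strictly positive on the open positive orthant $\mathcal{T}$, it is positively $1$-homogeneous since $F(\lambda v)=\lambda F(v)$ for $\lambda>0$, and $\mathcal{T}$, being an open orthant, is a convex conic domain.

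The crux is the signature of $g_v$. First I would record, by logarithmic differentiation of $F=(v^0\cdots v^n)^{1/(n+1)}$, that
\begin{equation*}
F_i(v)=\frac{\partial F}{\partial v^i}=\frac{F(v)}{(n+1)\,v^i}.
\end{equation*}
Writing $m:=n+1$ and $L:=F^2$, a second differentiation then gives
\begin{equation*}
g_{ij}(v)=\tfrac12\,\partial_i\partial_j L=\frac{L}{m}\left[\frac{2}{m}\,\frac{1}{v^iv^j}-\frac{\delta_{ij}}{(v^i)^2}\right].
\end{equation*}
To determine the signature I would evaluate this quadratic form on an arbitrary $x\in V$ and apply the invertible substitution $y_i:=x^i/v^i$ (legitimate because every $v^i>0$), obtaining
\begin{equation*}
g_{ij}(v)\,x^ix^j=\frac{L}{m}\left[\frac{2}{m}\Big(\sum_{i=0}^{n}y_i\Big)^2-\sum_{i=0}^{n}y_i^2\right]=\frac{L}{m}\,y^{\top}\!\Big(\tfrac{2}{m}J-I\Big)y,
\end{equation*}
where $J$ denotes the $m\times m$ all-ones matrix. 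Since $J$ has eigenvalue $m$ on $\mathbf{1}$ and eigenvalue $0$ on $\mathbf{1}^{\perp}$, the matrix $\tfrac{2}{m}J-I$ has eigenvalues $+1$ (simple) and $-1$ (multiplicity $n$); hence $g_v$ is Lorentzian, in particular has exactly one positive eigenvalue, for every $v\in\mathcal{T}$. This verifies the hypothesis of Theorem~\ref{prop_degenerate Finsler case}, and since $g_v$ is nondegenerate the resulting inequality will be strict. Equivalently, one could compute the Hessian of the inner polynomial $H=v^0\cdots v^n$ and invoke Proposition~\ref{prop:signHg}, exactly as in the Popoviciu case.

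With the fundamental inequality in hand, the conclusion is a substitution. Inserting $F_i(v)=F(v)/\big((n+1)v^i\big)$ into $F_i(v)w^i\ge F(w)$ yields
\begin{equation*}
\frac{F(v)}{n+1}\sum_{i=0}^{n}\frac{w^i}{v^i}\ge F(w)=(w^0\cdots w^n)^{1/(n+1)}.
\end{equation*}
Choosing $v=(1,1,\dots,1)\in\mathcal{T}$, so that $F(v)=1$ and all $v^i=1$, this collapses to
\begin{equation*}
\frac{w^0+\cdots+w^n}{n+1}\ge (w^0\cdots w^n)^{1/(n+1)},
\end{equation*}
which is the arithmetic-geometric mean inequality (\ref{arithmetic-geometric mean}) after renaming $w^i=a_i$. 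I do not anticipate a real obstacle here; the only mildly delicate point is establishing the Lorentzian signature on the entire cone, and the diagonalizing change of variables $y_i=x^i/v^i$ makes it transparent.
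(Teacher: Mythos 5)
Your proposal is correct, and it verifies the same hypothesis (Lorentzian signature of $g_{v}$ on all of $\mathcal{T}$) before invoking the same Theorem~\ref{prop_degenerate Finsler case}, but the key computation is carried out by a genuinely different route. The paper never computes $g_{ij}$ itself: it works with the Hessian $H_{ij}$ of the polynomial $H(v)=v^{0}v^{1}...v^{n}$, exhibits an explicit $H_{ij}$-orthogonal frame $e_{0}=v$, $e_{\alpha }^{i}=A_{\alpha }^{i}v^{i}$ with $\sum_{i}A_{\alpha }^{i}=0$ on which $H_{ij}$ is manifestly negative, and then transfers the Lorentzian signature from $H_{ij}$ to $g_{ij}$ via the appendix result (Proposition~\ref{prop:signHg}) on $m$-th root metrics. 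You instead compute $g_{ij}(v)=\frac{L}{m}\bigl[\frac{2}{m}\frac{1}{v^{i}v^{j}}-\frac{\delta _{ij}}{(v^{i})^{2}}\bigr]$ in closed form and diagonalize the quadratic form outright through the congruence $y_{i}=x^{i}/v^{i}$, reducing it to $\frac{2}{m}J-I$ with $J$ the all-ones matrix, whose eigenvalues $+1$ (simple) and $-1$ (multiplicity $n$) are immediate; Sylvester's law of inertia then gives the signature. Your route is self-contained (no appendix proposition, no auxiliary frame construction) and even yields the exact spectrum rather than just the signature; the paper's route has the advantage of working with the polynomial $H$ (no fractional powers) and of reusing Proposition~\ref{prop:signHg}, which is its common template for all the $m$-th root examples (Popoviciu, Berwald-Mo\'{o}r, bimetric). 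The endgame also differs slightly but equivalently: the paper keeps $v$ arbitrary and substitutes $a_{i}=w^{i}/v^{i}$, while you fix $v=(1,1,...,1)$ and rename $w^{i}=a_{i}$; both exhaust $\mathbb{R}_{+}^{\ast }$ and give the full arithmetic-geometric mean inequality.
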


\begin{proof}
The $n$-dimensional Berwald-Mo\'{o}r metric is known, \cite{Asanov1980}, to
be of Lorentzian signature. Yet, for the sake of completeness, we sketch a
proof of this fact below. To this aim, we will use Proposition \ref%
{prop:signHg}\textbf{.}

The Hessian of the $(n+1)$-th power $H(v):=v^{0}v^{1}...v^{n+1}$ of $F$ is: 
\begin{equation}
H_{ij}(v)=\left\{ 
\begin{array}{c}
0,~\ \ if~\ i=j \\ 
\dfrac{H(v)}{v^{i}v^{j}},\,\ \ \ \ if~~\ i\not=j.%
\end{array}%
\right.  \label{BM_H}
\end{equation}%
On $\mathcal{T},$ the matrix $(H_{ij}(v))$ has Lorentzian signature. To see
this, fix an arbitrary\textbf{\ }$v\in \mathcal{T}$ and introduce the
vectors $e_{0}:=v$ and$\ \ \{e_{\alpha }\},$ $\alpha =\overline{1,n}$ as
follows: 
\begin{equation*}
e_{\alpha }^{i}=A_{\alpha }^{i}v^{i},~\ i=\overline{0,n}\text{ }
\end{equation*}%
(where no summation is understood over $i$), such that: 
\begin{equation*}
\sum_{i=0}^{n}A_{\alpha }^{i}=A_{\alpha }^{0}+\sum_{\beta =1}^{n}A_{\alpha
}^{\beta }=0,~\ \ \ \ \ \det (A_{\alpha }^{\beta })_{\alpha ,\beta =%
\overline{1,n}}\not=0.
\end{equation*}%
The vectors $\left\{ e_{0},e_{\alpha }\right\} $ are linearly independent,
as the matrix with the columns $e_{0},e_{\alpha }$ has the determinant $%
H(v)\det (A_{\alpha }^{\beta })\not=0$. Moreover, $e_{\alpha }$ span the $%
(H_{ij}(v))$-orthogonal complement of $e_{0}=v$, since, using (\ref{BM_H}),
we find: 
\begin{equation*}
H_{ij}(v)v^{i}e_{\alpha }^{j}=0,~\ \ \forall \alpha =1,...,n.
\end{equation*}%
Then, on one hand, we have:%
\begin{equation*}
H_{ij}(v)v^{i}v^{j}=n(n-1)H(v)>0
\end{equation*}%
and, on the other hand, $H_{ij}(v)$ is negative definite on $Span\{e_{\alpha
}\}$, since: 
\begin{equation*}
H_{ij}(v)e_{\alpha }^{i}e_{\alpha }^{j}=H(v)\sum_{i\neq j}\frac{v^{i}}{v^{i}}%
\frac{v^{j}}{v^{j}}A_{\alpha }^{i}A_{\alpha }^{j}=H(v)\left( \left(
\sum_{i=0}^{n}A_{\alpha }^{i}\right) ^{2}-\sum_{i=0}^{n}(A_{\alpha
}^{i})^{2}\right) =0-H(v)\sum_{i=0}^{n}(A_{\alpha }^{i})^{2}\,.
\end{equation*}%
Consequently, $H_{ij}(v)$ has Lorentzian signature. Then, by Proposition \ref%
{prop:signHg}, also $g_{ij}(v)$ has Lorentzian signature on $\mathcal{T}$
and the fundamental inequality $F_{i}(v)w^{i}\geq F(w)$ holds $\forall
v,w\in \mathcal{T}$. We easily find: 
\begin{equation*}
F_{i}(v)=\frac{1}{n+1}H^{\frac{1}{n+1}-1}\frac{H(v)}{v^{i}}=\frac{F(v)}{n+1}%
\frac{1}{v^{i}}
\end{equation*}%
and thus 
\begin{equation*}
\frac{F(v)}{n+1}\sum_{i=0}^{n}\frac{w^{i}}{v^{i}}\geq F(w)
\end{equation*}%
or equivalently 
\begin{equation*}
\frac{1}{n+1}\sum_{i=0}^{n}\frac{w^{i}}{v^{i}}\geq \frac{F(w)}{F(v)}=\left( 
\dfrac{w^{0}}{v^{0}}\dfrac{w^{1}}{v^{1}}...\dfrac{w^{n}}{v^{n}}\right) ^{%
\tfrac{1}{n+1}}\,.
\end{equation*}%
Setting $a_{i}:=\dfrac{w^{i}}{v^{i}},\ i=\overline{0,n}$, $a_{i}$ take all
possible values in $\mathbb{R}_{+}^{\ast }$ and the fundamental inequality
becomes the aritmetic-geometric mean inequality 
\eqref{arithmetic-geometric
mean}.
\end{proof}

\subsection{Weighted arithmetic-geometric mean inequality\label%
{Section_weighted}}

\begin{proposition}
Let $\mathcal{T}\subset \mathbb{R}^{n+1}$ be the convex conic domain 
\begin{equation*}
\mathcal{T}:=\left\{ v\in \mathbb{R}^{n+1}~|~v^{0},v^{1},...,v^{n}>0\right\}
\subset \mathbb{R}^{n+1}\,.
\end{equation*}%
Then, the function $F:\mathcal{T}\rightarrow \mathbb{R}^{+},$ defined by%
\begin{equation*}
F(v)=\left( v^{0}\right) ^{a_{0}}\left( v^{1}\right) ^{a_{1}}...\left(
v^{n}\right) ^{a_{n}},\ \sum_{i=0}^{n}a_{i}=1\quad a_{i}\geq 0\,.
\end{equation*}%
is a Lorentz-Finsler norm whose fundamental inequality is the weighted
arithmetic-geometric mean inequality 
\begin{equation*}
\sum_{i=0}^{n}a_{i}v^{i}\geq
(v^{0})^{a_{0}}(v^{1})^{a_{1}}...(v^{n})^{a_{n}},~\ \ \ \ \ \ \ \ \ \ \
\forall v^{i}\in \mathbb{R}_{+}^{\ast }\,.
\end{equation*}
\end{proposition}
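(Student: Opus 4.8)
The plan is to reproduce the two-step pattern of the Berwald-Moór and Popoviciu proofs: first show that the fundamental tensor $g_v=\tfrac12\,\mathrm{Hess}(F^2)$ has Lorentzian signature (in particular, a single positive eigenvalue) on $\mathcal{T}$, so that the reverse Cauchy-Schwarz inequality \eqref{CS_coords} is available, and then differentiate $F$ to recognise that inequality as the weighted arithmetic-geometric mean inequality. Positive $1$-homogeneity is immediate from $F(\lambda v)=\prod_i(\lambda v^i)^{a_i}=\lambda^{\sum_i a_i}F(v)=\lambda F(v)$. I would first assume $a_i>0$ for every $i$; if some weight vanishes, the corresponding variable drops out of both sides of the asserted inequality, reducing to a lower-dimensional instance, and the only effect on $g_v$ is to make it degenerate along those directions -- a case still covered by Theorem \ref{prop_degenerate Finsler case}, since the surviving block carries a single positive eigenvalue.

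For the signature I would compute $g_v$ directly rather than through an integer power of $F$ (the weights need not be rational). Writing $L:=F^2$ and using $\log F=\sum_i a_i\log v^i$ gives $F_i=\tfrac{a_i}{v^i}F$ and, after one further differentiation,
\[
g_{ij}(v)=\tfrac12\,\frac{\partial^2 L}{\partial v^i\partial v^j}=2L\,c_ic_j-L\,\frac{a_i}{(v^i)^2}\,\delta_{ij},\qquad c_i:=\frac{a_i}{v^i}.
\]
On $\mathcal{T}$ all $v^i>0$, so $-L\,\mathrm{diag}\!\big(a_i/(v^i)^2\big)$ is negative definite while $2L\,c_ic_j$ is positive semidefinite of rank one. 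Thus $g_v$ is a rank-one positive perturbation of a negative-definite form and has at most one positive eigenvalue; since $g_v(v,v)=F^2(v)>0$ by \eqref{F in terms of g_v} (a fact one can re-derive from the formula above, the off-diagonal terms contributing $2L(1-\sum_i a_i^2)$ and the diagonal ones $L(2\sum_i a_i^2-1)$, with total $L$), it has exactly one. Hence $g_v$ has signature $(+,-,\dots,-)$ and $F$ is a Lorentz-Finsler norm. Proposition \ref{prop:signHg} could serve as an alternative route.

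It then remains only to read off \eqref{CS_coords}. Inserting $F_i(v)=\tfrac{a_i}{v^i}F(v)$ yields $F(v)\sum_i a_i\tfrac{w^i}{v^i}\ge F(w)$; dividing by $F(v)=\prod_i(v^i)^{a_i}>0$ and putting $t_i:=w^i/v^i$ turns this into $\sum_i a_i t_i\ge\prod_i t_i^{a_i}$. As $v,w$ run over $\mathcal{T}$ the ratios $t_i$ run over all of $\mathbb{R}_+^\ast$, and because $\sum_i a_i=1$ this is exactly the weighted arithmetic-geometric mean inequality in the stated form.

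The only genuine obstacle is the signature step; the differentiation and substitution are as routine as in the previous examples. Within that step the two points needing care are the passage from the rank-one-plus-negative-definite structure to \emph{at most} one positive eigenvalue, and the bookkeeping confirming $g_v(v,v)=F^2>0$ that pins that eigenvalue down. The degenerate subcase $a_i=0$ also deserves the brief remark made above.
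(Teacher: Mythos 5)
Your proposal is correct, and its overall skeleton matches the paper's proof: the paper computes exactly the same components $g_{ij}(u)=F(u)^{2}\left(\frac{2a_{i}a_{j}}{u^{i}u^{j}}-\frac{a_{i}\delta_{ij}}{(u^{i})^{2}}\right)$, establishes Lorentzian signature, and then inserts $F_{i}(u)=a_{i}F(u)/u^{i}$ into the fundamental inequality and substitutes the ratios $w^{i}/u^{i}$, precisely as you do in your final step. Where you genuinely diverge is the signature argument. The paper mirrors its Berwald--Mo\'{o}r computation: it builds an explicit $g_{u}$-orthogonal basis $e_{0}=u$, $e_{\alpha}^{i}=B_{\alpha}^{i}u^{i}$ subject to $\sum_{i}a_{i}B_{\alpha}^{i}=0$, $\det(B_{\alpha}^{\beta})\neq 0$, and checks negative definiteness on $\mathrm{Span}\{e_{\alpha}\}$ together with $g_{u}(u,u)=F^{2}>0$. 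You instead read the matrix structurally, as a rank-one positive semidefinite term $2Lc_{i}c_{j}$ added to the negative definite $-L\,\mathrm{diag}\bigl(a_{i}/(v^{i})^{2}\bigr)$, and invoke an inertia (Courant--Fischer) argument. Your route avoids constructing the auxiliary matrix $B$ and is arguably more transparent; one small point of precision is that ``at most one positive eigenvalue plus one positive direction'' by itself does not exclude null directions, so you should phrase the inertia step as giving at least $n$ \emph{negative} eigenvalues (negative definiteness of $g_{v}$ on the $n$-dimensional kernel of the rank-one term), which is what the same subspace argument delivers and which pins the signature to $(+,-,\dots,-)$. Finally, your handling of vanishing weights is actually more careful than the paper's: the statement allows $a_{i}\geq 0$, and the paper's claim $-F(u)^{2}\sum_{i}a_{i}(B_{\alpha}^{i})^{2}<0$ tacitly requires all $a_{i}>0$ (otherwise one can choose $B_{\alpha}$ supported on the indices with $a_{i}=0$ and the expression vanishes); your reduction of that degenerate case to Theorem \ref{prop_degenerate Finsler case} fills a gap the paper leaves implicit, at the cost of noting that $F$ is then only a degenerate Lorentz-Finsler function rather than a Lorentz-Finsler norm in the strict sense of the definition.
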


\begin{proof}
Fix $u\in \mathcal{T}.$ The components of the fundamental tensor are (no sum
convention is employed in the following expression) 
\begin{equation*}
g_{ij}(u)=\frac{1}{2}\frac{\partial ^{2}F^{2}}{\partial u^{i}\partial u^{j}}%
=F(u)^{2}\left( \frac{2a_{i}a_{j}}{u^{i}u^{j}}-\frac{a_{i}\delta _{ij}}{%
(u^{i})^{2}}\right) \,.
\end{equation*}%
Its signature can be determined similarly to the previous case. Introducing
the vectors $e_{0}=u$ and $e_{\alpha },\alpha =\overline{1,n}$ with
components 
\begin{equation*}
e_{\alpha }^{i}=B_{\alpha }^{i}u^{i}
\end{equation*}%
such that 
\begin{equation}
\sum_{i=0}^{n}a_{i}B_{\alpha }^{i}=a_{0}B_{\alpha }^{0}+\sum_{\beta
=1}^{n}a_{\beta }B_{\alpha }^{\beta }=0,  \label{B}
\end{equation}%
and $\det (B_{\alpha }^{\beta })\not=0,$ the vectors $\left\{
e_{0},e_{\alpha }\right\} $ are linearly independent and 
\begin{equation*}
g_{ij}(u)e_{0}^{i}e_{\alpha }^{j}=0\,.
\end{equation*}%
Thus, $\{e_{\alpha }\}_{\alpha =\overline{1,n}}$ span the orthogonal
complement of $e_{0}=u$. Moreover, $g_{ij}(u)$ is negative definite on $%
Span\{e_{\alpha }\}$, since, by (\ref{B}), we have: 
\begin{equation*}
g_{ij}(u)e_{\alpha }^{i}e_{\alpha }^{j}=F(u)^{2}\left( 2\left(
\sum_{i=0}^{n}a_{i}B_{\alpha }^{i}\right) ^{2}-\sum_{i=0}^{n}a_{i}(B_{\alpha
}^{i})^{2}\right) =-F(u)^{2}\sum_{i=0}^{n}a_{i}(B_{\alpha }^{i})^{2}<0;
\end{equation*}%
taking into account that $g_{ij}(u)e_{0}^{i}e_{0}^{j}=F(u)^{2}>0,$ we obtain
that $g_{ij}(u)\ $is Lorentzian. Calculating 
\begin{equation*}
F_{i}(u)=a_{i}\frac{F(u)}{u^{i}},
\end{equation*}%
the fundamental inequality \eqref{CS_coords} becomes 
\begin{equation*}
F(u)\sum_{i=0}^{n}\frac{a_{i}w^{i}}{u^{i}}\geq F(w)\,.
\end{equation*}%
Introducing $v_{i}=\frac{w^{i}}{u^{i}}\in \mathbb{R}_{+}^{\ast },$ it can be
rewritten as: $\sum\limits_{i=0}^{n}a_{i}v^{i}\geq
(v^{0})^{a_{0}}(v^{1})^{a_{1}}...(v^{n})^{a_{n}}\,.$The generalization (\ref%
{weighted_a}) then follows immediately.
\end{proof}

\subsection{\label{Section_bimetric}Bimetric structures on $\mathbb{R}^{n+1}$%
}

Finsler functions of the type 
\begin{equation*}
F(v)=[(g_{ij}v^{i}v^{j})(h_{lk}v^{k}v^{l})]^{\frac{1}{4}}\,,
\end{equation*}%
where $g_{ij}$ and $h_{kl}$ are Lorentzian metrics of same signature type $%
(+,-,-,...,-)$, are relevant in physics in four spacetime dimensions, when
one describes the propagation of light in birefringent crystals, see for
example \cite{PerlickBook,Pfeifer:2016har,Punzi:2007di}.

Yet, here we will discuss the general $\left( n+1\right) $-dimensional case%
\textbf{. }We can always choose a basis the tangent spaces of the manifold
such that one of the bilinear forms $h$ or $g$ assumes its normal form, i.e.
it is locally diagonal with entries $g_{ij}=\eta _{ij}$.

\begin{proposition}
Consider $\mathbb{R}^{n+1}$ equipped with the Minkowski metric $\eta $ and
another bilinear form $h$ of Lorentzian signature. Let $\mathcal{T}\subset 
\mathbb{R}^{n+1}$ be the convex conic domain given by the intersection of
the future pointing timelike vectors of $\eta $ and $h$:%
\begin{equation*}
\mathcal{T}:=\left\{ v\in \mathbb{R}^{n+1}|\eta _{ij}v^{i}v^{j}>0,\
h_{ij}v^{i}v^{j}>0,v^{0}>0\right\} \subset \mathbb{R}^{n+1}
\end{equation*}%
and $F:\mathcal{T}\rightarrow \mathbb{R}^{+},$ the bimetric Finsler
structure defined by 
\begin{equation*}
F(v)=[(\eta _{ij}v^{i}v^{j})^{\frac{1}{4}}(h_{lk}v^{k}v^{l})]^{\frac{1}{4}%
}\,.
\end{equation*}%
Then, the fundamental inequality $F_{i}(v)w^{i}\geq F(w)$ is 
\begin{equation}
\frac{1}{2}\left( \frac{\eta _{ij}w^{i}v^{j}}{\eta _{kl}v^{k}v^{l}}+\frac{%
h_{ij}w^{i}v^{j}}{h_{kl}v^{k}v^{l}}\right) \geq \frac{F(w)}{F(v)},~\ \ \ \ \
\ \forall v,w\in \mathcal{T}\,.
\end{equation}
\end{proposition}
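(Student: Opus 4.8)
The plan is to recognise the claimed inequality as precisely the fundamental (reverse Cauchy--Schwarz) inequality $F_i(v)w^i\ge F(w)$ of Theorem \ref{prop_degenerate Finsler case}(i), divided through by $F(v)$. Writing $A(v):=\eta_{ij}v^iv^j$ and $B(v):=h_{kl}v^kv^l$, so that $F=(AB)^{1/4}$ (the degree-$1$ norm of Example~5), there are two tasks: (a) compute $F_i$ and read off the translation; (b) verify the single hypothesis of Theorem \ref{prop_degenerate Finsler case}, namely that $g_v=\tfrac12\mathrm{Hess}(F^2)$ has exactly one positive eigenvalue throughout the convex cone $\mathcal{T}$. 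Convexity of $\mathcal{T}$ is immediate, since it is the intersection of the two convex timelike cones of $\eta$ and $h$.

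Task (a) is a short logarithmic-derivative computation. From $\log F=\tfrac14(\log A+\log B)$ I obtain
\begin{equation*}
F_i(v)=\frac{F(v)}{2}\left(\frac{\eta_{ik}v^k}{A}+\frac{h_{ik}v^k}{B}\right),
\end{equation*}
so that contracting with $w^i$, dividing by $F(v)>0$, and using \eqref{CS_coords} turns $F_i(v)w^i\ge F(w)$ into exactly the asserted inequality. Thus the whole content of the proposition lies in task (b).

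For the signature I would differentiate once more. A second logarithmic differentiation yields the clean form
\begin{equation*}
g_{ij}(v)=\frac{F^2}{2}\left[\frac{\eta_{ij}}{A}+\frac{h_{ij}}{B}-\frac14\,(p_i-q_i)(p_j-q_j)\right],\qquad p_i:=\frac{2\eta_{ik}v^k}{A},\quad q_i:=\frac{2h_{ik}v^k}{B}.
\end{equation*}
Then, following the frame technique already used for the Berwald--Mo\'or and weighted cases, I set $e_0:=v$ and let $W:=\ker(dF_v)$ be the $n$-dimensional $g_v$-orthogonal complement of $v$; since $g_v(v,v)=F^2>0$ and $g_v(v,W)=0$, it suffices to show $g_v$ is negative definite on $W$. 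For $u\in W$ one computes $(p_i-q_i)u^i=4s$ with $s:=\eta(v,u)/A=-h(v,u)/B$, whereas the reverse Cauchy--Schwarz estimates for the Lorentzian forms $\eta,h$ with the timelike vector $v$ give $\eta(u,u)/A\le s^2$ and $h(u,u)/B\le s^2$; hence
\begin{equation*}
g_v(u,u)\le \frac{F^2}{2}\bigl(s^2+s^2-4s^2\bigr)=-F^2 s^2\le 0,
\end{equation*}
with equality forcing $u=0$. So $g_v$ has signature $(+,-,\dots,-)$ on all of $\mathcal{T}$.

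The hard part is exactly this signature step, and the delicate point is why the rank-one correction does not spoil negative-definiteness on $W$: the two reverse Cauchy--Schwarz bounds contribute at most $2s^2$, while the subtracted rank-one term contributes $4s^2$, which dominates. Once (b) is in hand, $\mathcal{T}$ convex together with $g_v$ Lorentzian lets me invoke Theorem \ref{prop_degenerate Finsler case}(i), yielding the fundamental inequality and hence the claim; since $g_v$ is in fact nondegenerate, the inequality is moreover strict unless $v$ and $w$ are collinear.
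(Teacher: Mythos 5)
Your proposal is correct, and it reaches the conclusion by a genuinely different route than the paper at the one step that carries all the weight. The paper works with the fourth-order polynomial $H(v)=(\eta_{ij}v^iv^j)(h_{kl}v^kv^l)$, writes down its Hessian $H_{ij}$, \emph{cites} earlier work \cite{Pfeifer:2011tk,Pfeifer:2013gha} for the fact that $H_{ij}$ is Lorentzian on $\mathcal{T}$, and then transfers that signature to $g_{ij}$ via Proposition \ref{prop:signHg} of the Appendix; the computation of $F_i$ and the final rewriting are the same as yours (your logarithmic-derivative form of $F_i$ agrees with the paper's expression $\tfrac{1}{2}H^{-3/4}\left( \eta_{ij}v^{j}(h_{lk}v^{k}v^{l})+(\eta_{kl}v^{k}v^{l})h_{ij}v^{j}\right)$). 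You instead compute $g_{ij}$ in closed form by a second logarithmic differentiation and prove its Lorentzian signature from scratch: on the $g_v$-orthogonal complement $\ker(dF_v)$ you play the two classical reverse Cauchy--Schwarz estimates (in the form $\eta(v,u)^2\geq \eta(v,v)\,\eta(u,u)$, valid for $v$ timelike and $u$ \emph{arbitrary}, and likewise for $h$) against the subtracted rank-one term, whose contribution $4s^2$ dominates the at most $2s^2$ supplied by the two metrics --- I checked the identity $g_{ij}=\tfrac{F^2}{2}\bigl[\tfrac{\eta_{ij}}{A}+\tfrac{h_{ij}}{B}-\tfrac14 (p_i-q_i)(p_j-q_j)\bigr]$ and the bound $g_v(u,u)\le -F^2s^2$, and both are right, including the equality analysis forcing $u=0$. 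This is exactly the frame/orthogonal-complement technique the paper itself deploys for the Berwald--Mo\'or, weighted arithmetic-geometric and Kropina examples, so it fits the paper's toolkit while replacing the external citation. What each approach buys: the paper's argument is shorter given the quoted literature, but the signature fact is not proved in the paper; yours is self-contained, produces an explicit formula for $g_{ij}$, and establishes nondegeneracy of $g_v$ on all of $\mathcal{T}$, hence strictness of the inequality for non-collinear $v,w$, which the paper's route (which only needs the non-strict degenerate-case theorem) does not emphasize. One caution, common to you and to the paper: convexity of $\mathcal{T}$ as written requires that the condition $v^0>0$ single out one component of the $h$-timelike double cone (i.e., that the future cones of $\eta$ and $h$ actually intersect); you assume this implicitly, exactly as the proposition's wording does, so it is not a gap relative to the paper.
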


\begin{proof}
The Finsler structure is built from a fourth order polynomial $H(v):=(\eta
_{ij}v^{i}v^{j})(h_{lk}v^{k}v^{l})$ whose Hessian is given by 
\begin{equation*}
H_{ij}=2\eta _{ij}(h_{lk}v^{k}v^{l})+4(\eta _{ik}h_{jl}+\eta
_{jk}h_{il})v^{k}v^{l}+2h_{ij}(\eta _{kl}v^{k}v^{l});
\end{equation*}%
it was proven, see \cite{Pfeifer:2011tk,Pfeifer:2013gha,Raetzel:2010je},
that $H_{ij}$ has Lorentzian signature on $\mathcal{T}.$ Consequently, again
using Proposition \ref{prop:signHg}, $g_{ij}$ is of Lorentzian signature on $%
\mathcal{T}$ and the fundamental inequality $F_{i}(v)w^{i}\geq F(w)$ holds.
We easily calculate: 
\begin{equation*}
F_{i}(v)=\frac{1}{2}\frac{1}{H(v)^{\frac{3}{4}}}[\eta _{ij}v^{j}\
(h_{lk}v^{k}v^{l})+(\eta _{kl}v^{k}v^{l})\ h_{ij}v^{j}],
\end{equation*}%
and thus the inequality 
\begin{equation*}
\frac{1}{2}\frac{1}{H(v)^{\frac{3}{4}}}\left( \eta _{ij}w^{i}v^{j}\
(h_{lk}v^{k}v^{l})+(\eta _{kl}v^{k}v^{l})\ h_{ij}w^{i}v^{j}\right) \geq
H(w)^{\frac{1}{4}}
\end{equation*}%
holds. Both sides of the inequality can be multiplied with the positive
factor $H(v)^{-\frac{1}{4}}$ to obtain 
\begin{equation*}
\frac{1}{2}\left( \frac{\eta _{ij}w^{i}v^{j}}{\eta _{kl}v^{k}v^{l}}+\frac{%
h_{ij}w^{i}v^{j}}{h_{kl}v^{k}v^{l}}\right) \geq \left( \frac{H(w)}{H(v)}%
\right) ^{\frac{1}{4}}\,.
\end{equation*}
\end{proof}

\bigskip

As an explicit example, take $\mathbb{R}^{2}$ with $v=(v^{0},v^{1})$, $%
w=(w^{0},w^{1})$ and set $h_{kl}v^{k}v^{l}:=2(v^{0})^{2}-(v^{1})^{2}.$ As $%
\eta _{ij}v^{i}v^{j}=(v^{0})^{2}-(v^{1})^{2}$, we find for all $%
v^{0},v^{1},w^{0},w^{1}$ such that $(v^{0})^{2}>(v^{1})^{2}$ and $%
(w^{0})^{2}>(w^{1})^{2}:$%
\begin{equation}
\frac{1}{16}\left( \frac{v^{0}w^{0}-v^{1}w^{1}}{(v^{0})^{2}-(v^{1})^{2}}+%
\frac{2v^{0}w^{0}-v^{1}w^{1}}{2(v^{0})^{2}-(v^{1})^{2}}\right) ^{4}\geq 
\frac{((w^{0})^{2}-(w^{1})^{2})(2(w^{0})^{2}-(w^{1})^{2})}{%
((v^{0})^{2}-(v^{1})^{2})(2(v^{0})^{2}-(v^{1})^{2})}\,.
\end{equation}

\subsection{$\left( \protect\alpha ,\protect\beta \right) $-metrics. Kropina
metrics}

\label{Section_Kropina}

An intensively studied class of Finsler metrics studies are the so-called $%
(\alpha ,\beta )$-metrics, defined in terms of two building blocks: $\alpha
(v)=\sqrt{\eta (v,v)}$ and $\beta (v)=b_{i}v^{i}$, where $b_{i}$ are
components of an element $b\in V^{\ast }$. A general $(\alpha ,\beta )$%
-Finsler function can be expressed as 
\begin{equation}
F(\alpha ,\beta )=\alpha \phi (s),  \label{eq:ab}
\end{equation}%
where $\phi $ is a function of the 0-homogeneous variable $s=\frac{\beta }{%
\alpha }$.

Formally, the fundamental inequality (\ref{reverse_CS_coord_free}) of this
class of Finsler metrics takes the form 
\begin{equation}
\frac{\eta (v,w)}{\alpha (v)}\left[ \phi (s(v))-s(v)\phi ^{\prime }(s(v))%
\right] +\phi ^{\prime }(s(v))\beta (w)\geq \alpha (w)\phi (s(w))\,.
\label{fieqab}
\end{equation}

For the positive definite case, the conditions which need to be satisfied
such that \eqref{eq:ab} gives a well defined Finsler space are already
known, \cite[Lemma 1.1.2]{Bao}. For the indefinite case, the precise
conditions under which \eqref{eq:ab} defines a Lorentzian-Finsler structure
will be studied in a future work.

We now consider two examples of $(\alpha ,\beta )$-metrics: Randers metrics
- which lead to somewhat trivial results - and a first non-trivial example,
represented by Kropina metrics.

\bigskip

%Here is a first (albeit, trivial) example. \textit{Randers metrics }are
%defined by $F=\alpha +\beta ,$ i.e., $\phi (s)=s+1$ and are known to be
%Lorentzian if $\beta $ obeys: $\eta ^{ij}b_{i}b_{j}\in (0,1),$ see \cite%
%{Hohmann:2018rpp}. In this case, (\ref{fieqab}) gives: 
%\begin{equation*}
%\frac{\eta (v,w)}{\alpha (v)}+\beta (w)\geq \alpha (w)\left( \dfrac{\beta (w)%
%}{\alpha (w)}+1\right) ,
%\end{equation*}%
%which, after reducing the $\beta $-terms, leads to $\eta (v,w)\geq \alpha
%(v)\alpha (w),$ which is just the reverse Cauchy-Schwarz inequality for the
%undeformed metric $\eta ;$ so, in this case, no new inequalities appear.

%A first non-trivial example is represented by Kropina metrics, which we
%present below.

\subsubsection{Randers metrics}

The classical Randers metric, \cite{Randers}, is defined by the function $%
\phi (s)=1+s.$ They are known to be Lorentzian if $\beta $ obeys: $\eta
^{ij}b_{i}b_{j}\in (0,1),$ see \cite{Hohmann:2018rpp}. The fundamental
inequality \eqref{fieqab} becomes 
\begin{equation*}
\frac{\eta (v,w)}{\alpha (v)}+\beta (w)\geq \alpha (w)\left( \dfrac{\beta (w)%
}{\alpha (w)}+1\right) ,
\end{equation*}%
which, after canceling the $\beta $ terms, is nothing but the classical
reversed Cauchy-Schwarz inequality for the undeformed Lorentzian scalar
product $\eta $.

The same argument can be applied in the following cases:

a) If the Lorentzian scalar product $\eta $ is replaced by a positive
definite scalar product $h$. Then, the fundamental inequality 
\eqref{pos def
CS Finsler} yields the non-reversed Cauchy-Schwarz inequality for $h$.

b)\ For the deformed Randers metric $\tilde{F}=-\tilde{\alpha}+\beta =\tilde{%
\alpha}(-1+\tilde{s})$, where $\tilde{\alpha}=\sqrt{h(\dot{x},\dot{x})}$ is
constructed from a positive definite scalar product $h$, as in \cite%
{Caponio:2014gra}; $\tilde{F}$ defines a Lorentzian Finsler structure on $V,$
for $1-h^{ij}b_{i}b_{j}>0$. Replacing $\alpha $ with $\tilde{\alpha}$ and $%
\eta $ with $h$ in \eqref{fieqab}, one immediately deduces the classical
non-reversed Cauchy-Schwarz inequality for $h$. Thus, in this case, the
reversed fundamental inequality of a Lorentzian Finsler metric implies the
non-reversed Cauchy-Schwarz inequality for a positive definite scalar
product.

\subsubsection{Kropina metrics}

Another famous example of $(\alpha,\beta)$-metrics, are Kropina metrics,
which are applied for example in the context of Zermelo navigation, \cite%
{Javaloyes-Vitorio}. We will use a Kropina-type deformation of the Minkowski
metric $\eta$ in order to find out an inequality regarding $\eta$. They are
defined by choosing $\phi(s) = s^{-1}$ in \eqref{eq:ab}.

\begin{proposition}
Let $\mathcal{T}\subset \mathbb{R}^{n+1}$ be the convex conic domain 
\begin{equation*}
\mathcal{T}:=\left\{ v\in \mathbb{R}^{n+1}~|~\eta
_{ij}v^{i}v^{j}>0,v^{0}>0\right\} \subset \mathbb{R}^{n+1}\,
\end{equation*}%
and the smooth, 1-homogeneous function $F:\mathcal{T}\rightarrow \mathbb{R}%
^{+}$ be defined by 
\begin{equation*}
F(v)=\frac{\eta _{ij}v^{i}v^{j}}{v^{0}}=\dfrac{1}{v^{0}}%
[(v^{0})^{2}-(v^{1})^{2}-....-(v^{1})^{2}].
\end{equation*}%
Then, $F$ obeys the strict fundamental inequality (\ref%
{reverse_CS_coord_free}), which becomes:%
\begin{equation}
2\eta (v,w)\geq \frac{w^{0}}{v^{0}}\eta (v,v)+\dfrac{v^{0}}{w^{0}}\eta
(w,w),~\ \ \ \ \forall v,w\in \mathcal{T}.  \label{eq:kropineq}
\end{equation}
\end{proposition}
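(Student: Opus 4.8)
The plan is to read off (\ref{eq:kropineq}) as the coordinate form (\ref{CS_coords}) of the reverse Cauchy--Schwarz inequality delivered by Theorem~\ref{prop_degenerate Finsler case}, so the whole job is to check that theorem's three hypotheses and then unwind the resulting inequality. Two of the hypotheses are immediate: $\mathcal{T}$ is the future timelike cone of $\eta$, hence an open convex conic domain (as recorded in Section~\ref{sec:classineq}), and since $v^{0}>0$ throughout $\mathcal{T}$ the quotient $F(v)=\eta_{ij}v^{i}v^{j}/v^{0}$ is smooth there, positive-valued, and manifestly positively $1$-homogeneous. The only substantive point is the signature hypothesis: I must verify that $g_{v}=\tfrac12\mathrm{Hess}(F^{2})$ has a single positive eigenvalue at every $v\in\mathcal{T}$.

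To settle the signature I would use the $e_{0}=v$ device already employed for the Berwald--Mo\'or and weighted-mean examples. By $1$-homogeneity and (\ref{F in terms of g_v}), $g_{v}(v,v)=F(v)^{2}>0$, so $v$ is a positive direction. Writing $Q:=\eta_{ij}v^{i}v^{j}$ and using the Euler identity $g_{ij}(v)v^{i}=\tfrac12\,\partial F^{2}/\partial v^{j}$, a short computation identifies the $g_{v}$-orthogonal complement of $v$ as the hyperplane $W=\{u:\,2v^{0}\eta(v,u)=Q\,u^{0}\}$, and shows that on $W$ the fundamental tensor reduces to
\begin{equation*}
g_{v}(u,u)=\frac{2Q}{(v^{0})^{2}}\,\eta(u,u).
\end{equation*}
Since $Q>0$, the sign of $g_{v}$ on $W$ is that of $\eta|_{W}$. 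Now $W$ is exactly the $\eta$-orthogonal complement of the vector $N:=2v^{0}v-Q\,e_{0}$, and a direct evaluation gives $\eta(N,N)=Q^{2}>0$, so $N$ is $\eta$-timelike and $\eta|_{W}$ is negative definite. Hence $g_{v}$ has signature $(+,-,\dots,-)$ and in particular a single positive eigenvalue, which is all Theorem~\ref{prop_degenerate Finsler case} needs. (It is even nondegenerate, so strictness would in fact follow; the proposition only asserts the non-strict inequality, which then holds \emph{a fortiori}.) This signature step is the main obstacle; past it everything is bookkeeping.

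With the hypotheses verified, Theorem~\ref{prop_degenerate Finsler case}(i) gives $F_{i}(v)w^{i}\ge F(w)$. Differentiating $F=Q/v^{0}$ yields $F_{0}(v)=2-Q/(v^{0})^{2}$ and $F_{\alpha}(v)=-2v^{\alpha}/v^{0}$, so that, after substituting $\sum_{\alpha}v^{\alpha}w^{\alpha}=v^{0}w^{0}-\eta(v,w)$,
\begin{equation*}
F_{i}(v)w^{i}=\frac{2\eta(v,w)}{v^{0}}-\frac{Q\,w^{0}}{(v^{0})^{2}}.
\end{equation*}
Since $F(w)=\eta(w,w)/w^{0}$, the inequality $F_{i}(v)w^{i}\ge F(w)$ becomes
\begin{equation*}
\frac{2\eta(v,w)}{v^{0}}-\frac{Q\,w^{0}}{(v^{0})^{2}}\ge\frac{\eta(w,w)}{w^{0}};
\end{equation*}
multiplying through by $v^{0}>0$ and regrouping produces exactly (\ref{eq:kropineq}). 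I anticipate no difficulty here beyond tracking the $v^{0},w^{0}$ factors.
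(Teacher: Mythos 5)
Your proposal is correct, and I checked your intermediate claims: the hyperplane $W=\{u:\,2v^{0}\eta(v,u)=Q\,u^{0}\}$ is indeed the $g_{v}$-orthocomplement of $v$ (equivalently $F_{i}(v)u^{i}=0$), the restriction formula $g_{v}(u,u)=\tfrac{2Q}{(v^{0})^{2}}\eta(u,u)$ holds on $W$, $\eta(N,N)=Q^{2}>0$ for $N=2v^{0}v-Qe_{0}$, and $F_{i}(v)w^{i}=\tfrac{2\eta(v,w)}{v^{0}}-\tfrac{Qw^{0}}{(v^{0})^{2}}$; the final unwinding coincides with the paper's. The skeleton (verify the hypotheses of Theorem~\ref{prop_degenerate Finsler case}, then compute $F_{i}$ and regroup) is the same as in the paper, but your handling of the one substantive point --- the signature of $g_{v}$ --- is genuinely different. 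The paper never identifies the orthocomplement explicitly: it computes the Hessian $(F_{ij})$ of $F$ itself, factors $F_{ij}(v)w^{i}w^{j}=-\tfrac{2}{(v^{0})^{3}}\,\delta _{\alpha \beta }(w^{0}v^{\alpha }-v^{0}w^{\alpha })(w^{0}v^{\beta }-v^{0}w^{\beta })\leq 0$ as minus a sum of squares, and then uses the identity $g_{ij}=FF_{ij}+F_{i}F_{j}$ to conclude that $g_{v}$ is negative \emph{semi}definite on the $g_{v}$-orthocomplement of $v$; together with $g_{v}(v,v)=F^{2}>0$, this is exactly the ``one positive eigenvalue'' hypothesis of the degenerate theorem, yielding the non-strict inequality. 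You instead work directly with $g_{v}$ on $W$, recognize $g_{v}|_{W}$ as a positive multiple of $\eta |_{W}$, and invoke the standard Lorentzian-algebra fact that the $\eta$-orthocomplement of a timelike vector is spacelike. Your route costs the slightly heavier computation of $Hess(F^{2})$ rather than $Hess(F)$, but it buys a strictly stronger conclusion: $g_{v}$ is negative \emph{definite} on $W$, hence nondegenerate (genuinely Lorentzian), so by the strictness clause of Theorem~\ref{prop_degenerate Finsler case} the inequality \eqref{eq:kropineq} is in fact strict for non-collinear $v,w$ --- a refinement the paper's semidefiniteness argument does not deliver and the proposition does not claim.
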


\begin{proof}
Let us rewrite $F$ as:%
\begin{equation*}
F(v)=v^{0}-\dfrac{\vec{v}\cdot \vec{v}}{v^{0}},
\end{equation*}%
where $v=(v^{0},\vec{v}),$ $\vec{v}=(v^{1},...,v^{n})$ and $\vec{v}\cdot 
\vec{v}=\delta _{\alpha \beta }v^{\alpha }v^{\beta }$ denotes the standard
Euclidean product on $\mathbb{R}^{n}.$ We easily get the derivatives of $F$
as: 
\begin{eqnarray}
F_{0}(v) &=&1+\dfrac{\vec{v}\cdot \vec{v}}{(v^{0})^{2}},~\ \ \ F_{\alpha
}(v)=-2\dfrac{\delta _{\alpha \beta }v^{\beta }}{v^{0}},~\ \ \alpha =1,...,n;
\label{derivs_F_1} \\
F_{00}(v) &=&-2\dfrac{\vec{v}\cdot \vec{v}}{(v^{0})^{3}},~\ \ F_{\alpha 0}=2%
\dfrac{\delta _{\alpha \beta }v^{\beta }}{(v^{0})^{2}},~\ \ \ F_{\alpha
\beta }=-2\dfrac{\delta _{\alpha \beta }}{v^{0}}.  \label{derivs_F_2}
\end{eqnarray}%
Now, we can prove that the matrix $g_{ij}(v)$ has Lorentzian signature for
all $v\in \mathcal{T}.$

First, we notice that $g_{v}(v,v)=g_{ij}(v)v^{i}v^{j}=F(v)^{2}>0$ on $%
\mathcal{T}$.

Second, we show that $g_{v}$ is negative semidefinite on the $g_{v}$%
-orthogonal complement of $v.$ This complement is defined by $%
g_{ij}(v)v^{j}w^{i}=0$, which is equivalent to: $F_{i}(v)w^{i}=0.$ Taking
into account the identity $g_{ij}(v)=FF_{ij}(v)+F_{i}(v)F_{j}(v)$, for $%
g_{v} $-orthogonal vectors $w,$ we can write:%
\begin{equation}
g_{ij}(v)w^{i}w^{j}=F(v)(F_{ij}(v)w^{i}w^{j}).  \label{g_rel_Kropina}
\end{equation}%
Taking into account (\ref{derivs_F_2}), we get: 
\begin{eqnarray*}
F_{ij}(v)w^{i}w^{j} &=&\dfrac{-2}{(v^{0})^{3}}\left[ (\vec{v}\cdot \vec{v}%
)\left( w^{0}\right) ^{2}-2\left( \vec{w}\cdot \vec{v}\right)
v^{0}w^{0}+\left( \vec{w}\cdot \vec{w}\right) (v^{0})^{2}\right] \\
&=&\dfrac{-2}{(v^{0})^{3}}(w^{0}\vec{v}-v^{0}\vec{w})\cdot (w^{0}\vec{v}%
-v^{0}\vec{w})\leq 0.
\end{eqnarray*}

Since $F(v)>0$ on $\mathcal{T},$ we obtain from (\ref{g_rel_Kropina}) that $%
g_{ij}(v)w^{i}w^{j}\leq 0$ on the $g_{v}$-orthogonal complement of $v$,
where $F_{ij}(v)w^{i}w^{j}=0$ implies $\vec{v}=\dfrac{v^{0}}{w^{0}}\vec{w};$
further, this leads to $(v^{0},\vec{v})=\dfrac{v^{0}}{w^{0}}(w^{0},\vec{w}),$
i.e., the vectors $v=(v^{0},\vec{v})$ and $w=\left( w^{0},\vec{w}\right) $
are collinear. Consequently, $g_{v}$ is Lorentzian and the resulting
fundamental inequality is strict.

The fundamental inequality can quickly be calculated either from (\ref%
{fieqab}) with $\phi =s^{-1}$, or with help of 
\begin{equation*}
F_{i}(v)=2\frac{\eta _{ij}v^{j}}{v^{0}}-\frac{\eta _{jk}v^{j}v^{k}}{%
(v^{0})^{2}}\delta _{i}^{0}
\end{equation*}%
which eventually leads to the desired inequality \eqref{eq:kropineq}.
\end{proof}

\section{\label{Aczel section}Acz\'{e}l inequality: generalizations and
refinements}

In the following, we will use a particular class of Lorentzian Finsler
metrics to obtain a generalization and some refinements of the Acz\'{e}l
inequality (\ref{1_1}).

Consider, on $\mathbb{R}^{n+1}\backslash \{0\},$ a smooth, positive definite
Finsler norm $\bar{F}$ and set:%
\begin{equation}
F(v):=\sqrt{\left( v^{0}\right) ^{2}-\bar{F}^{2}(\vec{v})},  \label{smooth F}
\end{equation}%
where $v=\left( v^{0},\vec{v}\right) $ belongs to the open conic subset of $%
\mathbb{R}^{n+1}\backslash \{0\}:$%
\begin{equation}
\mathcal{T}=\left\{ v=\left( v^{0},\vec{v}\right) \in \mathbb{R}%
^{n+1}~|v^{0}>\bar{F}(\vec{v})\right\} .  \label{T_set_Aczel}
\end{equation}%
Such functions $F$ are used in the context of stationary Lorentz Finsler
norms studied in \cite{Lammerzahl:2012kw,Stancarone} as well as in the
context of the Zermelo navigation \cite{Caponio:2014gra}.

As the function $\bar{F}$\ is convex, its epigraph $\mathcal{T}$ is convex;
it is also connected as it is the preimage of $(0,\infty )$ through the
continuous function $v\mapsto \left( v^{0}\right) -\bar{F}(\vec{v})$\textbf{%
. }Hence, $\mathcal{T}$ is a convex conic domain.

The metric tensor $g=\dfrac{1}{2}Hess(F^{2})$ is Lorentzian on $\mathcal{T},$
except for the half line $\left\{ \left( v^{0},0,0,....,0\right)
~|v^{0}>0\right\} ,$ where, due to the fact that $\bar{F}$ is only
continuous at $0\in \mathbb{R}^{n},$ $g_{v}$ is not guaranteed to exist.
Yet, the Acz\'{e}l inequality can still be extended to such metrics, as
follows.

\begin{proposition}
\textbf{(Finslerian Acz\'{e}l inequality)\label{Finslerian Aczel ineq}}%
\textit{: Let }$\bar{F}:\mathbb{R}^{n}\rightarrow \mathbb{R}$ be an
arbitrary Finsler norm on $\mathbb{R}^{n}$ and set%
\begin{equation}
\left\Vert v\right\Vert =\bar{F}(\vec{v}),~\ \ \ \ \ \bar{g}_{\vec{v}}(\vec{v%
},\vec{w}):=\dfrac{1}{2}\dfrac{\partial ^{2}\bar{F}^{2}}{\partial v^{\alpha
}\partial v^{\beta }}(\vec{v})v^{\alpha }w^{\beta }.  \label{alpha_norm}
\end{equation}%
Then, for all $v^{0},w^{0}>0$ and for all $\vec{v},\vec{w}\in \mathbb{R}%
^{n}\backslash \{0\}$ such that $\left( v^{0}\right) ^{2}-\left\Vert \vec{v}%
\right\Vert ^{2}>0,$ $\left( w^{0}\right) ^{2}-\left\Vert \vec{w}\right\Vert
^{2}>0$, there holds:%
\begin{equation}
\left[ v^{0}w^{0}-\vec{g}_{\vec{v}}(\vec{v},\vec{w})\right] ^{2}-[\left(
v^{0}\right) ^{2}-\left\Vert \vec{v}\right\Vert ^{2}][\left( w^{0}\right)
^{2}-\left\Vert \vec{w}\right\Vert ^{2}]\geq 0,  \label{Finslerian Aczel}
\end{equation}%
where equality takes place if and only if the vectors $v=(v^{0},\vec{v}),$ $%
\left( w^{0},\vec{w}\right) \in \mathbb{R}^{n+1}$ are collinear.
\end{proposition}

The proof is a direct one and relies on the following lemma:

\begin{lemma}
For all $v,w\in \mathbb{R}^{n+1},$ $v=(v^{0},\vec{v}),$ $w=(w^{0},\vec{w})$
with $\vec{w}\not=0,$ there holds:%
\begin{eqnarray}
&&\left[ v^{0}w^{0}-\vec{g}_{\vec{v}}(\vec{v},\vec{w})\right] ^{2}-[\left(
v^{0}\right) ^{2}-\left\Vert \vec{v}\right\Vert ^{2}][\left( w^{0}\right)
^{2}-\left\Vert \vec{w}\right\Vert ^{2}]=  \label{lemma} \\
&=&\left[ w^{0}\dfrac{\vec{g}_{\vec{v}}(\vec{v},\vec{w})}{\left\Vert \vec{w}%
\right\Vert }-v^{0}\left\Vert \vec{w}\right\Vert \right] ^{2}+\dfrac{\left(
w^{0}\right) ^{2}-\left\Vert \vec{w}\right\Vert ^{2}}{\left\Vert \vec{w}%
\right\Vert ^{2}}\left[ \left\Vert \vec{v}\right\Vert ^{2}\left\Vert \vec{w}%
\right\Vert ^{2}-\vec{g}_{\vec{v}}(\vec{v},\vec{w})\right] .  \notag
\end{eqnarray}
\end{lemma}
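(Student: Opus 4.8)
The plan is to prove this as a pure algebraic identity. For fixed $v,w$, each quantity appearing---$v^{0}$, $w^{0}$, $\bar{g}_{\vec{v}}(\vec{v},\vec{w})$, $\left\Vert \vec{v}\right\Vert $, $\left\Vert \vec{w}\right\Vert $---is simply a real number, and the assertion is a rational identity in these five reals, valid whenever $\left\Vert \vec{w}\right\Vert \neq 0$. No Finsler geometry is needed; the sole role of the hypothesis $\vec{w}\neq 0$ is to keep the denominator $\left\Vert \vec{w}\right\Vert ^{2}$ from vanishing.

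First I would pass to shorthand to suppress the bulky symbols: set $x:=v^{0}$, $y:=w^{0}$, $G:=\bar{g}_{\vec{v}}(\vec{v},\vec{w})$, $P:=\left\Vert \vec{v}\right\Vert $ and $Q:=\left\Vert \vec{w}\right\Vert \neq 0$. The claim then reads
\[
(xy-G)^{2}-(x^{2}-P^{2})(y^{2}-Q^{2})=\Big(\tfrac{yG}{Q}-xQ\Big)^{2}+\frac{y^{2}-Q^{2}}{Q^{2}}\big(P^{2}Q^{2}-G^{2}\big).
\]
Expanding the left-hand side directly gives
\[
G^{2}-2xyG+x^{2}Q^{2}+P^{2}y^{2}-P^{2}Q^{2}.
\]

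For the right-hand side I would expand the perfect square as $\tfrac{y^{2}G^{2}}{Q^{2}}-2xyG+x^{2}Q^{2}$, and rewrite the second summand over the denominator $Q^{2}$ as $P^{2}y^{2}-\tfrac{y^{2}G^{2}}{Q^{2}}-P^{2}Q^{2}+G^{2}$. Adding the two, the contributions $\pm\tfrac{y^{2}G^{2}}{Q^{2}}$ cancel, and what remains matches the expanded left-hand side term by term. This cancellation of the $Q^{-2}$ pieces is the only point requiring care, and it is also exactly why $Q\neq 0$ is assumed; there is no substantive obstacle beyond the bookkeeping. (Equivalently, one may view the identity as completing the square in the cross term $2xyG$, but the direct expansion is the most transparent route.)

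Finally I would flag one discrepancy to be reconciled before the proof closes: as printed, the last factor on the right reads $\left\Vert \vec{v}\right\Vert ^{2}\left\Vert \vec{w}\right\Vert ^{2}-\bar{g}_{\vec{v}}(\vec{v},\vec{w})$, which is not homogeneous and does not make the two sides agree. For the identity to hold---and for this factor to be the nonnegative Cauchy--Schwarz gap $\left\Vert \vec{v}\right\Vert ^{2}\left\Vert \vec{w}\right\Vert ^{2}-\bar{g}_{\vec{v}}(\vec{v},\vec{w})^{2}=P^{2}Q^{2}-G^{2}$ that drives the subsequent refinements (\ref{refinement1})--(\ref{refinement2})---the term must read $\bar{g}_{\vec{v}}(\vec{v},\vec{w})^{2}$. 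With that reading the expansions coincide exactly, and the identity is established.
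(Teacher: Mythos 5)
Your proof is correct and takes essentially the same route as the paper's: a direct expansion showing that both sides reduce to one and the same polynomial in the five reals $v^{0}$, $w^{0}$, $\left\Vert \vec{v}\right\Vert$, $\left\Vert \vec{w}\right\Vert$, $\bar{g}_{\vec{v}}(\vec{v},\vec{w})$. Your typo diagnosis is also correct and applies to the paper itself: the identity (and the subsequent refinements) only hold with $\bar{g}_{\vec{v}}(\vec{v},\vec{w})^{2}$ in the final bracket, and the paper's own proof carries the same misprint in its displayed common value (\ref{lhs_Aczel}), whose second bracket should read $\bar{g}_{\vec{v}}(\vec{v},\vec{w})^{2}-\left\Vert \vec{v}\right\Vert ^{2}\left\Vert \vec{w}\right\Vert ^{2}$ rather than $\bar{g}_{\vec{v}}(\vec{v},\vec{w})-\left\Vert \vec{v}\right\Vert ^{2}\left\Vert \vec{w}\right\Vert ^{2}$.
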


\begin{proof}
of the lemma:\ By direct computation, we find that both hand sides are
actually equal to: 
\begin{equation}
\left[ \left\Vert \vec{v}\right\Vert ^{2}\left( w^{0}\right) ^{2}+\left(
v^{0}\right) ^{2}\left\Vert \vec{w}\right\Vert ^{2}-2v^{0}w^{0}\vec{g}_{\vec{%
v}}(\vec{v},\vec{w})\right] +\left[ \vec{g}_{\vec{v}}(\vec{v},\vec{w}%
)-\left\Vert \vec{v}\right\Vert ^{2}\left\Vert \vec{w}\right\Vert ^{2}\right]
.  \label{lhs_Aczel}
\end{equation}
\end{proof}

\bigskip

\begin{proof}[Proof of Proposition \protect\ref{Finslerian Aczel ineq}]
Since $w\in \mathcal{T},$ we have $\left( w^{0}\right) ^{2}-\left\Vert \vec{w%
}\right\Vert ^{2}>0.$ Also, using the (non-reversed) Cauchy-Schwarz
inequality for $\bar{F},$ we find that the right hand side of (\ref{lemma})
is nonnegative; hence, the left hand side must also be nonnegative, which is
exactly (\ref{Finslerian Aczel}). \ Moreover, equality to zero can only
happen when both square brackets in the right hand side of (\ref{lemma})
vanish. This means, on one hand, $\vec{g}_{\vec{v}}(\vec{v},\vec{w}%
)-\left\Vert \vec{v}\right\Vert ^{2}\left\Vert \vec{w}\right\Vert ^{2}=0,$
which implies: $\vec{w}=\alpha \vec{v};$ the vanishing of the other bracket
then leads to $w^{0}=\alpha v^{0},$ i.e., the vectors $v,w\in \mathbb{R}%
^{n+1}$ are collinear.
\end{proof}

\bigskip

The above Lemma immediately yields two even more powerful results:

\begin{proposition}
\textbf{(Refinements of the Finslerian Acz\'{e}l inequality)}. Let $g_{v}$
be defined by a Finslerian norm $\bar{F}=\left\Vert \cdot \right\Vert $ on $%
\mathbb{R}^{n}$ as in (\ref{alpha_norm}). Then, for any $v^{0},w^{0}\in 
\mathbb{R}$ and any $\vec{v},\vec{w}\in \mathbb{R}^{n}\backslash \{0\}$%
\textbf{\ }such that\textbf{\ }$\left( v^{0}\right) ^{2}-\left\Vert \vec{v}%
\right\Vert ^{2}>0,$ $\left( w^{0}\right) ^{2}-\left\Vert \vec{w}\right\Vert
^{2}>0$\textbf{, }there hold the inequalities:

\begin{enumerate}
\item[\textit{(i)}] $\left[ v^{0}w^{0}-\vec{g}_{\vec{v}}(\vec{v},\vec{w})%
\right] ^{2}-[\left( v^{0}\right) ^{2}-\left\Vert \vec{v}\right\Vert
^{2}][\left( w^{0}\right) ^{2}-\left\Vert \vec{w}\right\Vert ^{2}]\geq 
\dfrac{\left( w^{0}\right) ^{2}-\left\Vert \vec{w}\right\Vert ^{2}}{%
\left\Vert \vec{w}\right\Vert ^{2}}(\left\Vert \vec{v}\right\Vert
^{2}\left\Vert \vec{w}\right\Vert ^{2}-\vec{g}_{\vec{v}}(\vec{v},\vec{w}));$

\item[\textit{(ii)}] $\left[ v^{0}w^{0}-\vec{g}_{\vec{v}}(\vec{v},\vec{w})%
\right] ^{2}-[\left( v^{0}\right) ^{2}-\left\Vert \vec{v}\right\Vert
^{2}][\left( w^{0}\right) ^{2}-\left\Vert \vec{w}\right\Vert ^{2}]\geq
\lbrack w^{0}\dfrac{\vec{g}_{\vec{v}}(\vec{v},\vec{w})}{\left\Vert \vec{w}%
\right\Vert }-v^{0}\left\Vert \vec{w}\right\Vert ]^{2}.$
\end{enumerate}
\end{proposition}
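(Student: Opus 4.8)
The plan is to read both refinements directly off the identity \eqref{lemma} established in the Lemma, by splitting its right-hand side into two separately nonnegative summands and then discarding one of them. Denote the Acz\'{e}l quantity on the left-hand side of \eqref{Finslerian Aczel} by $\mathcal{A}$, and write $\mathcal{A}=S+R$, where $S=\big[\,w^{0}\tfrac{\vec{g}_{\vec{v}}(\vec{v},\vec{w})}{\left\Vert \vec{w}\right\Vert }-v^{0}\left\Vert \vec{w}\right\Vert \,\big]^{2}$ is the perfect-square term and $R=\tfrac{(w^{0})^{2}-\left\Vert \vec{w}\right\Vert ^{2}}{\left\Vert \vec{w}\right\Vert ^{2}}\big[\,\left\Vert \vec{v}\right\Vert ^{2}\left\Vert \vec{w}\right\Vert ^{2}-\vec{g}_{\vec{v}}(\vec{v},\vec{w})^{2}\,\big]$ is the remaining summand (the last bracket of \eqref{lemma}, read as the Cauchy-Schwarz expression). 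Once I establish $S\ge 0$ and $R\ge 0$, refinement (i) follows by bounding $\mathcal{A}=S+R\ge R$, i.e.\ dropping the square $S$, and refinement (ii) follows by bounding $\mathcal{A}=S+R\ge S$, i.e.\ dropping $R$.

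The second step is to verify the two nonnegativity claims. The term $S$ is manifestly nonnegative, being the square of a real number; it is well defined because $\left\Vert \vec{w}\right\Vert=\bar{F}(\vec{w})>0$ thanks to the hypothesis $\vec{w}\ne 0$. For $R$, the prefactor $\tfrac{(w^{0})^{2}-\left\Vert \vec{w}\right\Vert ^{2}}{\left\Vert \vec{w}\right\Vert ^{2}}$ is positive: its numerator is positive by the assumption $(w^{0})^{2}-\left\Vert \vec{w}\right\Vert ^{2}>0$ and its denominator is positive because $\vec{w}\ne 0$. It then remains only to see that the bracket $\left\Vert \vec{v}\right\Vert ^{2}\left\Vert \vec{w}\right\Vert ^{2}-\vec{g}_{\vec{v}}(\vec{v},\vec{w})^{2}$ is nonnegative, which is exactly the content of the \emph{non-reversed} Finslerian Cauchy-Schwarz inequality \eqref{pos def CS Finsler detailed} applied to the positive-definite norm $\bar{F}$ on $\mathbb{R}^{n}$: squaring $\vec{g}_{\vec{v}}(\vec{v},\vec{w})\le \left\Vert \vec{v}\right\Vert \left\Vert \vec{w}\right\Vert$ yields $\vec{g}_{\vec{v}}(\vec{v},\vec{w})^{2}\le \left\Vert \vec{v}\right\Vert ^{2}\left\Vert \vec{w}\right\Vert ^{2}$. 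Hence $R\ge 0$, and both displayed inequalities follow at once.

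There is no serious obstacle here; the whole statement is a formal corollary of the Lemma once the decomposition $\mathcal{A}=S+R$ is in hand. The only point requiring care is to invoke Cauchy-Schwarz in the correct, positive-definite direction: since $\bar{F}$ is a genuine Finsler norm on $\mathbb{R}^{n}$, one must use \eqref{pos def CS Finsler detailed} and not its reversed Lorentzian counterpart. It is also worth remarking, in the spirit of the preceding Remark, that both refinements are genuine strengthenings of \eqref{Finslerian Aczel}: the discarded summand ($S$ in (i), $R$ in (ii)) is strictly positive away from the degenerate and collinear configurations identified there, so in the generic case each lower bound lies strictly between $0$ and $\mathcal{A}$.
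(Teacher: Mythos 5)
Your overall route is exactly the paper's: the paper, too, obtains both refinements by splitting the right-hand side of the identity \eqref{lemma} into the perfect square $S$ and the remainder $R$ and discarding one summand. Part (i) is correct as you argue, since it uses only $S\geq 0$; moreover, your reading of the last bracket of \eqref{lemma} as $\left\Vert \vec{v}\right\Vert ^{2}\left\Vert \vec{w}\right\Vert ^{2}-\bar{g}_{\vec{v}}(\vec{v},\vec{w})^{2}$ (with the square, which the paper's typography omits) is the only tenable one: without the square, \eqref{lemma} is not an identity, and (i) itself would already fail for the Euclidean norm (take $\vec{v}=\vec{w}$ with $\left\Vert \vec{v}\right\Vert =2$ and $v^{0}=w^{0}=3$: the left side is $0$, the unsquared right side is $15$).

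In part (ii), however, your proof has a genuine gap, at the step ``squaring $\bar{g}_{\vec{v}}(\vec{v},\vec{w})\leq \left\Vert \vec{v}\right\Vert \left\Vert \vec{w}\right\Vert $''. Squaring a one-sided inequality $x\leq y$ gives $x^{2}\leq y^{2}$ only if one also knows $x\geq -y$, and that lower bound is not available here: $\bar{F}$ is only \emph{positively} homogeneous, so applying \eqref{pos def CS Finsler detailed} to $-\vec{w}$ yields $\bar{g}_{\vec{v}}(\vec{v},\vec{w})\geq -\left\Vert \vec{v}\right\Vert \bar{F}(-\vec{w})$, and $\bar{F}(-\vec{w})$ may exceed $\bar{F}(\vec{w})$. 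The failure is real, not just one of wording: for the Randers norm $\bar{F}(\vec{u})=|\vec{u}|+\frac{1}{2}u^{1}$ on $\mathbb{R}^{n}$, take $\vec{v}=e_{1}$, $\vec{w}=-e_{1}$, $v^{0}=2$, $w^{0}=1$. Then $\left\Vert \vec{v}\right\Vert =\frac{3}{2}$, $\left\Vert \vec{w}\right\Vert =\frac{1}{2}$, and by \eqref{F_w}, $\bar{g}_{\vec{v}}(\vec{v},\vec{w})=\bar{F}(\vec{v})\,d\bar{F}_{\vec{v}}(\vec{w})=-\frac{9}{4}$; the hypotheses $\left( v^{0}\right) ^{2}>\left\Vert \vec{v}\right\Vert ^{2}$ and $\left( w^{0}\right) ^{2}>\left\Vert \vec{w}\right\Vert ^{2}$ hold, the left-hand side of (ii) equals $\left( \frac{17}{4}\right) ^{2}-\frac{7}{4}\cdot \frac{3}{4}=\frac{67}{4}$, while the right-hand side equals $\left( -\frac{11}{2}\right) ^{2}=\frac{121}{4}$, so the inequality is violated (equivalently, your $R=-\frac{27}{2}<0$). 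Hence (ii) cannot be proved in the stated generality because it is false there; it becomes true --- and your argument then works verbatim --- under the extra hypothesis that $\bar{F}$ is absolutely homogeneous (reversible), since then $\bar{F}(-\vec{w})=\bar{F}(\vec{w})$ gives the two-sided bound $|\bar{g}_{\vec{v}}(\vec{v},\vec{w})|\leq \left\Vert \vec{v}\right\Vert \left\Vert \vec{w}\right\Vert $ and thus $R\geq 0$. To be fair, this defect is inherited from the paper itself: the Remark following \eqref{lemma} performs the same illegitimate squaring when it asserts nonnegativity of the brackets in \eqref{lhs_Aczel}, so the paper's own one-line derivation of (ii) contains the identical hole; your proposal simply makes it visible.
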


\bigskip

\textbf{Note. }A somewhat similar statement can be obtained by considering,
on the entire space $V\simeq \mathbb{R}^{n+1},$ a positive definite Finsler
norm $\hat{F}$ and a 1-form $\omega =\omega _{i}dx^{i}.$ Then, the mapping $%
v\mapsto F(v)=\sqrt{\omega ^{2}(v)-\hat{F}^{2}(v)}$ defines (see Theorem 4.1
in \cite{Javaloyes2019}), a smooth, nondegenerate Lorentz-Finsler norm on
the convex conic domain $\mathcal{T}=\mathcal{\{}v\in V~|~\omega (v)>\hat{F}%
(v)\}~\subset V\backslash \{0\}.$ The reverse Cauchy-Schwarz inequality then
reads:%
\begin{equation}
\lbrack \omega (v)\omega (w)-\hat{g}_{v}(v,w)]^{2}\geq \lbrack \omega
^{2}(v)-\hat{F}^{2}(v)][\omega ^{2}(w)-\hat{F}^{2}(w)].  \label{Aczel_gen_2}
\end{equation}

\bigskip

\textbf{Acknowledgements} C.P. was supported by the Estonian Ministry for
Education and Science through the Personal Research Funding Grants PSG489,
as well as the European Regional Development Fund through the Center of
Excellence TK133 \textquotedblleft The Dark Side of the
Universe\textquotedblright . N.V. was supported by a local grant of
Transilvania University\textbf{.}

We would like to express our gratitude to the anonymous MIA reviewer, whose
suggestions led us to extending our results and enlarging the presented
classes of examples.

\begin{appendices}
\section{The signature of $m$-th root metrics}

\label{app:mthroot} Among the Lorentz-Finsler norms in Section~\ref{sec:ex},
we encountered $m$-th root metrics, which are functions of the type: 
\begin{equation*}
F:\mathcal{T}\rightarrow \mathbb{R},\quad F(v)=H(v)^{\tfrac{1}{m}},\quad 
\text{with}\quad H(v):=a_{i_{1}...i_{m}}v^{i_{1}}...v^{i_{m}}\,,
\end{equation*}%
where $a_{i_{1}...i_{m}}$ are constants and $\mathcal{T}$ is a convex conic
domain in\textbf{\ }$\mathbb{R}^{n+1}$\textbf{\ }where\textbf{\ }$H(v)>0$.
Here, $m>2$ is fixed.

To determine the signature of $g_{ij}=\frac{1}{2}\frac{\partial F^{2}}{%
\partial v^{i}\partial v^{j}}$ in some of our examples, we relate it to the
signature of the Hessian $H_{ij}:=\frac{\partial H}{\partial v^{i}\partial
v^{j}}$ of $H$. By direct calculation, we get (see also  \cite{Brinzei-projective}, \cite{Pfeifer:2011tk}%): 
\begin{equation}
H_{ij}=mF^{m-2}[g_{ij}+(m-2)F_{i}F_{j}].  \label{P_ij}
\end{equation}%
The following result will greatly help us simplify calculations in concrete
examples.

\begin{proposition}
\label{prop:signHg} If, for a vector $v\in \mathcal{T},$ the matrix $%
H_{ij}(v)$ has Lorentzian signature $\left( +,-,-,..,-\right) ,$ then $%
g_{ij}(v)$ has also Lorentzian signature.
\end{proposition}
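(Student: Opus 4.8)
The plan is to compare the signatures of $g_{ij}$ and $H_{ij}$ through the algebraic identity \eqref{P_ij}, exploiting that the two matrices differ, up to a positive scalar, only by a rank-one term. First I would observe that on $\mathcal{T}$ we have $H(v)>0$, hence $F(v)=H(v)^{1/m}>0$ and the scalar factor $mF^{m-2}$ in \eqref{P_ij} is strictly positive. Consequently $H_{ij}(v)$ and the matrix
\[
G_{ij}:=g_{ij}(v)+(m-2)F_i(v)F_j(v)
\]
have exactly the same signature, so the hypothesis may be restated as: $G_{ij}$ is Lorentzian, i.e. it has one positive and $n$ negative eigenvalues.

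Next I would record that $G_{ij}-g_{ij}=(m-2)F_iF_j$ is positive semidefinite, since $m>2$: it is a nonnegative multiple of the rank-one form $F_iF_j$. Thus $g\preceq G$ in the Loewner order. By Weyl's monotonicity theorem the ordered eigenvalues satisfy $\lambda_k(g)\le\lambda_k(G)$ for every $k$, so every negative eigenvalue of $G$ forces a negative eigenvalue of $g$. Since $G$ has $n$ negative eigenvalues, $g_{ij}(v)$ has at least $n$ negative eigenvalues.

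It remains to control the one surviving eigenvalue of the symmetric $(n+1)\times(n+1)$ matrix $g_{ij}(v)$. Here I would invoke the $1$-homogeneity of $F$: Euler's theorem applied to the degree-two function $F^2$ gives $g_{ij}(v)v^iv^j=F(v)^2>0$, so the quadratic form $u\mapsto g_{ij}(v)u^iu^j$ is positive on $u=v$ and $g$ cannot be negative semidefinite; hence it has at least one positive eigenvalue. Combining the two counts, a symmetric $(n+1)\times(n+1)$ matrix with at least $n$ negative and at least one positive eigenvalue must have exactly $n$ negative, exactly one positive, and no zero eigenvalue, i.e. it is Lorentzian.

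The main obstacle is that $g$ is obtained from the Lorentzian $G$ by \emph{subtracting} a positive semidefinite rank-one matrix, an operation that lowers eigenvalues and could a priori push the single positive eigenvalue of $G$ down to a nonpositive value, collapsing $g$ to a negative (semi)definite form. The crux of the argument is precisely that the monotonicity bound controls only the negative eigenvalues, while the intrinsic identity $g(v,v)=F^2>0$ independently guarantees that the top eigenvalue remains strictly positive; the two facts together pin the signature exactly, and also rule out degeneracy.
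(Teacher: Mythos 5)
Your proof is correct and takes essentially the same route as the paper: both rest on the identity (\ref{P_ij}), the fact that the discrepancy $(m-2)F_{i}(v)F_{j}(v)$ is positive semidefinite, and the Euler-type identity $g_{ij}(v)v^{i}v^{j}=F(v)^{2}>0$ to secure the single positive eigenvalue and rule out degeneracy. The only packaging difference is that you transfer the $n$ negative directions via the Loewner order and Weyl's eigenvalue monotonicity, whereas the paper argues directly that $g_{ij}(v)$ is negative definite on the same $n$-dimensional subspace on which $H_{ij}(v)$ is.
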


\begin{proof}
Assume that $H_{ij}(v)$ has index $n$; therefore, there exists an $n$
-dimensional subspace of $\mathbb{R}^{n+1}$ where it is negative definite.
Pick any vector $w\in \mathcal{T}$ with the property $H_{ij}(v)w^{i}w^{j}<0$%
. From (\ref{P_ij}) we find: $%
g_{ij}(v)w^{i}w^{j}+(m-2)(l_{i}(v)w^{j})^{2}<0. $ Since $m>2,$ this implies $%
g_{ij}(v)w^{i}w^{j}<0,$ that is, $(g_{ij})$ is also negative definite on at
least the same $n$-dimensional subspace.

But,\ $g_{ij}(v)v^{i}v^{j}=F^{2}(v)>0,$ which means that $g_{ij}$ cannot be
negative (semi-)definite on the whole $V.$ Consequently, it must be
Lorentzian.
\end{proof}
\end{appendices}

\end{document}